\gdef\n@te#1#2{\leavevmode\vadjust{%
 {\setbox\z@\hbox to\z@{\strut#1}%
  \setbox\z@\hbox{\raise\dp\strutbox\box\z@}\ht\z@=\z@\dp\z@=\z@%
  #2\box\z@}}}
\gdef\leftnote#1{\n@te{\hss#1\quad}{}}
\gdef\rightnote#1{\n@te{\quad\kern-\leftskip#1\hss}{\moveright\hsize}}
\gdef\?{\FN@\qumark}
\gdef\qumark{\ifx\next"\DN@"##1"{\leftnote{\rm##1}}\else
 \DN@{\leftnote{\rm??}}\fi{\rm??}\next@}}
\DeclareFontFamily{OT1}{wncyr}{\hyphenchar\font45
}
\DeclareFontShape{OT1}{wncyr}{m}{n}{%
   <5> <6> <7> <8> <9> gen * wncyr
   <10> <10.95> <12> <14.4> <17.28> <20.74>  <24.88>wncyr10}{}
\DeclareFontShape{OT1}{wncyr}{m}{it}{%
   <5> <6> <7> <8> <9> gen * wncyi
   <10> <10.95> <12> <14.4> <17.28> <20.74> <24.88> wncyi10}{}
\DeclareFontShape{OT1}{wncyr}{m}{sc}{%
   <5> <6> <7> <8> <9> <10> <10.95> <12> <14.4>
   <17.28> <20.74> <24.88>wncysc10}{}
\DeclareFontShape{OT1}{wncyr}{b}{n}{%
   <5> <6> <7> <8> <9> gen * wncyb
   <10> <10.95> <12> <14.4> <17.28> <20.74> <24.88>wncyb10}{}
\theoremstyle{plain}
\newtheorem{theorem}{Theorem}
\newtheorem{proposition}[theorem]{Proposition}
\newtheorem{lemma}[theorem]{Lemma}
\newtheorem{corollary}[theorem]{Corollary}
\theoremstyle{definition}
\newtheorem{nothing*}[theorem]{}
\newtheorem{subnothing*}[sub]{}
\newtheorem{example}[theorem]{Example}
\theoremstyle{remark}
\newtheorem{remark}[theorem]{\bf Remark}
\newtheorem{subsec}{\bf \hskip -.5mm}
\newcommand{\cc}{\raise .4pt \hbox{{$\scriptstyle{\bullet}$}}}
\begin{document}

\title[Embeddings of groups
${\rm Aut}(F_n)$]{
Embeddings of groups \boldmath${\rm Aut}(F_n)$ \\
into automorphism groups\\
of algebraic varieties
}
\author[Vladimir L. Popov]{Vladimir L. Popov}
\address{Steklov Mathematical Institute,
Russian Academy of Sciences, Gub\-kina 8,
Moscow 119991, Russia}
\email{popovvl@mi-ras.ru}

\dedicatory{To the memory of J.\;E.\;Humphreys}

\maketitle

{\def\thefootnote{\relax}

\begin{abstract}
For every positive integer $n$, we construct, using algeb\-raic groups, an infinite family of irre\-du\-c\-ible
algebraic varieties $X$,
whose automorphism group ${\rm Aut}(X)$ contains
 the automorphism group ${\rm Aut}(F_n)$ of a free group $F_n$ of rank
 $n$ as a subgroup. This property implies that, for $n \geqslant 2$, such groups ${\rm Aut}(X)$ are non\-amenable, and, for $n \geqslant 3$, 
 nonlinear and contain the braid group $B_n$ on $n$ strands.
 Some of these varieties $X$ are
 affine, and among affine, some are rational and some are not, some are smooth and some are singular.
 As an application, we deduce
 that, for $n \geqslant 3 $, every Cremona group
of rank $\geqslant 3n$ contains  the groups ${\rm Aut}(F_n)$ and
$B_n$ as the subgroups. This bound is better than the one that follows from the paper by D. Krammer \cite{5-12a}, where the linearity of the braid group
$B_n$ is proved.
 \end{abstract}


\begin{subsec}{\bf Introduction.}
In the last decade,
studying biregular and birational automorphism groups of algebraic varieties, in particular, their abstract group pro\-perties and subgroups, has become a trend. In terms of popu\-larity,
the Cremona groups are probably the leaders among the groups in the focus of these explorations.

In this paper, for every positive integer $n$, we construct, using algeb\-raic groups,  an infinite family of irreducible algebraic varieties $X$, on which the automorphism group ${\rm Aut}(F_n)$ of a free group $F_n$ of rank $n$ acts (algebraically).
We explore the properties of these actions. Central to us is the question of the faithfulness of these actions,
because every such faithful action
defines an embedding of the group ${\rm Aut}(F_n)$ into
the automorphism group ${\rm Aut}(X)$ of the variety $X$, and therefore, provides an information about the subgroups and group properties of ${\rm Aut}(X)$.
We find an infinite family of varieties $X$, for which the constructed action is faith\-ful.


  This is applied to the linearity problem for automorphism groups of algebraic varieties considered in \cite[Prop. 5.1]{14-8},
  \cite{3-9}, \cite{15-6}: the exis\-tence in ${\rm Aut}(X)$ of a subgroup isomorphic to ${\rm Aut}(F_n)$ implies that
  the group ${\rm Aut}(X)$
  is nonamenable for $ n \geqslant 2$ and nonlinear for $n\geqslant 3$. In addition, since some important groups (for example, the braid group $B_n$ on $n$ strands for $n\geqslant 3$) are the
  subgroups of ${\rm Aut}(F_n)$,
  we obtain their
  realization in the form of subgroups
  of automorphism groups of the constructed algebraic varieties.

 Among these varieties some are affine (some of them smooth and some with singularities), and among them some are rational and some are nonra\-tio\-nal (and even not stably rational).

As another application we obtain that
for $n\geqslant 3$, every Cremona group
of rank $\geqslant 3n$ contains the group ${\rm Aut}(F_n)$
and the braid group\;$B_n$ as the subgroups. This bound for the rank of the Cremona group containing
$ B_n $ is better than that following from \cite[Thm. 4.6] {5-12a}, where the linearity of the group $ B_n $ is proved.


If instead of algebraic groups in the considered construction
we take real Lie groups,
this yields results on topological transfor\-ma\-tion groups. In particular,
in this way we obtain families of differenti\-able manifolds (among which there are also compact ones), whose dif\-feo\-mor\-phism groups contain ${\rm Aut}(F_n)$.

\vskip 2mm

{\it Acknowledgement.}   The author is grateful to N.\;L.\;Gordeev for
dis\-cus\-sion of problems about group identities
arising from the proof of Theo\-rem \ref{fg},
information about some publications on this topic, and
com\-ments on the first version of this paper.

\end{subsec}

\begin{subsec}{\bf Conventions and notation.}  In what follows, algebraic varieties are considered over an algebrai\-cally closed field $k$. With regard to algebraic geometry and algebraic groups, we
follow conventions and notation from \cite{9-2}
used freely.


${\rm Sym}(X)$ denotes the symmetric group of a set $X$.

If $X$ is an algebraic variety (respectively, a differentiable manifold), then the subgroup of
${\rm Sym}(X)$ consisting of all automorphisms (respecti\-vely, diffeomorphisms) of $X$ is denoted by ${\rm Aut}(X)$.

Groups are considered in multiplicative notation.

The statement that a group $G$ contains a group $H$
means the exis\-tence of a monomorphism $\iota \colon H \hookrightarrow G$,
by means of which $H$ is identified with $ \iota(H) $.


${\mathscr C}(G)$ denotes the  center of a group $G$.

${\rm Lie}(G)$ denotes the Lie algebra of an algebraic group $G$.

$\underline{G}$ denotes the underlying variety (resp. manifold) of an algebraic group (resp. real Lie group) $G$.

${\rm Aut}(G)$ and ${\rm End}(G)$ denote, respectively, the automorphism group and the endomorphism monoid of a group $G$. If $G$ is an algebraic group or a real Lie group,
then by automorphisms we mean automorphisms in the category of algebraic or real Lie groups, so ${\rm Aut}(G)$ is the in\-ter\-section of ${\rm Aut}({\underline G})$ with the automorphism group of the abstract group\;$G$.


\end{subsec}

\begin{subsec}{\bf Word-defined mappings and their properties.}
Consider a posi\-tive integer $n$,
an abstract group $G$ with the identity element $e$, and the group
\begin{equation}\label{G}
X:=G^{n}:=G\times\cdots\times G\quad \mbox{($n$ factors).}
\end{equation}

Let $F_n$ be a free group
of rank $n$ with a free system of generators $ f_1, \ldots,  f_n$.
We denote the identity element in $ {\rm Aut}(F_n) $ by $1$.

For any elements $w\in F_n$ and
\begin{equation}\label{gh}
x=(g_1, \ldots, g_n)\in X,\;\;g_j\in G,
\end{equation}
denote by
$$w(x)=w(g_1,\ldots, g_n)$$
the element of $G$ that is the image of element $w$
under the homomor\-phism $F_n\to G$ that maps $f_j$ to $g_j$  for each $j$.
In other words, $w(x)$ is obtained from the word $w$ in $f_1, \ldots, f_n$
by substituting
$g_j$ in place  of $ f_j$ for each $ j $.

Every element $\sigma \in {\rm End}(F_n)$ is uniquely determined by
the sequence of elements $\sigma(f_1), \ldots, \sigma (f_n) $, and any sequence of elements
of $F_n$ of length $n$ is of this form for some $\sigma $. This sequence
defines the following map (which, in general,
is not an endomorphism of the group $ X $):
\begin{equation}\label{mor}
\sigma_X\colon X\to X,\quad x\mapsto (\sigma(f_1)(x),\ldots, \sigma(f_n)(x)).
\end{equation}

Some properties of such maps are listed below.


\begin
{proposition}\label{pro}
\label{prop} Let $\sigma$ and $\tau$ be any elements of ${\rm End}(F_n)$. Then
the following hold:
\begin{enumerate}[\hskip 2.2mm\rm(a)]
\item ${(\sigma\circ \tau)_{X}^{\ }}=\tau_{X}\circ \sigma_{X}^{\ }$.
    \item
    $1_X={\rm id}$.
    \item
$\sigma_{X}(S^{n})
\subseteq S^{n}$ for every subgroup $S$ of the group $G$.
\item The following properties of element {\rm \eqref{gh}} are equivalent:
\begin{enumerate}[\hskip 0mm\rm(a)]
\item $\sigma_X(x)=x$ for every $\sigma\in {\rm Aut}(F_n)$;
\item if $n>1$, then $g_1=\ldots=g_n=e$, and if $n=1$, then $g_1^2=e$.
\end{enumerate}
\item Let $\gamma\colon G\to H$ be a group homomorphism,
and $Y:=H^{n}$. Then the map
\begin{align*}
{\gamma}_n\colon X
\to Y, \quad
(g_1,\ldots, g_n)
\mapsto (\gamma(g_1),\ldots, \gamma(g_n))
\end{align*}
is ${\rm End}(F_n)$-equivariant, i.e.,
${\gamma}_n\circ \sigma_{X}
=\sigma_{Y}\circ {\gamma}_n.
$
\item
    $\sigma_{X}(xz)=\sigma_{X}(x)\sigma_{X}(z)$ for all $x\in
X$, $z\in {\mathscr C}(X)$. In particular, the restriction of
$\sigma_{X}$
to the group ${\mathscr C}(X)$
is its endomorphism.
\item
$\sigma_X$ commutes with the diagonal
action of $G$ on $X$ by conjugation.
\item If $G$ is an algebraic group
 \textup(respectively, a real Lie group\textup), then
$\sigma_X$ is a morphism \textup(respectively, a differentiable mapping).
\end{enumerate}
\end{proposition}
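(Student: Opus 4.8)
The plan is to reduce everything to a single functoriality property of evaluation. For each $x=(g_1,\dots,g_n)\in X$, let $\phi_x\colon F_n\to G$ be the homomorphism with $f_j\mapsto g_j$, so that by definition $w(x)=\phi_x(w)$. The $j$-th coordinate of $\sigma_X(x)$ is $\phi_x(\sigma(f_j))$, and I would observe that the evaluation homomorphism at the point $\sigma_X(x)$ is precisely the composite $\phi_x\circ\sigma$, since both homomorphisms $F_n\to G$ send $f_j$ to $\phi_x(\sigma(f_j))$. This gives the master identity
\[
w(\sigma_X(x))=(\phi_x\circ\sigma)(w)=\phi_x(\sigma(w))=(\sigma(w))(x)
\qquad(w\in F_n,\ x\in X),
\]
out of which the formal parts fall mechanically.

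For (a), the $j$-th coordinate of $(\sigma\circ\tau)_X(x)$ is $(\sigma(\tau(f_j)))(x)$, which by the master identity (with $w=\tau(f_j)$) equals $(\tau(f_j))(\sigma_X(x))$, the $j$-th coordinate of $\tau_X(\sigma_X(x))$; the contravariance is forced by the composite $\phi_x\circ\sigma$. Part (b) is immediate, as $1(f_j)=f_j$ gives $\phi_x(f_j)=g_j$, and (c) is clear because each coordinate $\phi_x(\sigma(f_j))$ is the value of a word in the elements $g_1,\dots,g_n\in S$, hence lies in the subgroup $S$.

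The hard part will be (d), the only place needing input specific to $\mathrm{Aut}(F_n)$. The implication (b)$\Rightarrow$(a) I would check directly: any word evaluates to $e$ at $(e,\dots,e)$, and for $n=1$ one has $\mathrm{Aut}(F_1)=\{\pm1\}$, where $g_1^2=e$ makes the inversion $f_1\mapsto f_1^{-1}$ act as the identity. For (a)$\Rightarrow$(b) with $n>1$, the idea is to feed the hypothesis $\sigma_X(x)=x$ well-chosen automorphisms: the elementary Nielsen transvection $\sigma\colon f_i\mapsto f_if_j$ (fixing the remaining generators, $i\neq j$) has $i$-th coordinate $\phi_x(f_if_j)=g_ig_j$, so $\sigma_X(x)=x$ forces $g_ig_j=g_i$, that is $g_j=e$; since $n>1$ every index $j$ has such a partner $i$, giving $g_1=\dots=g_n=e$. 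For $n=1$ the inversion automorphism forces $g_1^{-1}=g_1$. The obstacle here is not conceptual but one of bookkeeping---selecting automorphisms that isolate a single coordinate---and it uses only that transvections and inversions are automorphisms, not any structure theory.

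The remaining parts I would reduce to basic properties of evaluation. For (e), since $\gamma$ is a homomorphism, $\gamma\circ\phi_x=\phi_{\gamma_n(x)}$, so $\gamma(w(x))=w(\gamma_n(x))$ for every word $w$; taking $w=\sigma(f_j)$ gives the equivariance coordinatewise. Part (g) is then (e) applied to the inner automorphism $\gamma=$ conjugation by $h$, whose induced $\gamma_n$ is exactly the diagonal conjugation. For (f), I would note that if $z=(c_1,\dots,c_n)$ has central coordinates, then $w\mapsto\phi_x(w)\,\phi_z(w)$ is again a homomorphism $F_n\to G$ (the cross terms commute because $\phi_z(w)\in\mathscr{C}(G)$) sending $f_i\mapsto g_ic_i$, so freeness of $F_n$ yields $w(xz)=w(x)\,w(z)$, which coordinatewise is $\sigma_X(xz)=\sigma_X(x)\,\sigma_X(z)$; the ``in particular'' clause then combines this with (c) for $S=\mathscr{C}(G)$. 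Finally (h) holds because each coordinate $x\mapsto\phi_x(\sigma(f_j))$ is a word map, assembled by finitely many multiplications and inversions from the coordinate projections, which are morphisms (resp.\ smooth) when $G$ is an algebraic group (resp.\ real Lie group), and a map into $X=G^n$ is a morphism (resp.\ differentiable) iff all its coordinates are.
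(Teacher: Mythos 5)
Your proof is correct and takes essentially the same route as the paper's: for part (d) you use the very same witnesses (the Nielsen transvections $f_i\mapsto f_if_j$ for $n\geqslant 2$ and the inversion $f_1\mapsto f_1^{-1}$ for $n=1$), and the remaining parts are exactly the ``direct from the definitions'' verifications the paper leaves implicit, which your master identity $\phi_{\sigma_X(x)}=\phi_x\circ\sigma$ merely packages in a tidier way (with the derivation of (g) from (e) via inner automorphisms as a small bonus).
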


\begin{proof} Statement (d) follows from the fact that if $n = 1$, then the only element of the group $ {\rm Aut}(F_n)$ not equal to $1$ maps $ f_1 $ to $ f_1^{- 1} $, and
if $ n \geqslant 2 $, then for any $ i, j \in \{1, \ldots, n \} $, $ i \neq j $,
the element $ \sigma_{ij} \in {\rm End} (F_n) $ defined by formula
\begin{equation*}
\sigma_{ij}(f_{l})=\begin{cases}
f_l&\mbox{if $l\neq i$},\\
f_if_j&\mbox{if $l=i$},
\end{cases}
\end{equation*}
 lies in ${\rm Aut}(F_n)$.

The rest of the statements follow directly from the definitions and the fact
that each element of the group $ F_n $ is written as
a non-commutative Laurent monomial in $ f_1, \ldots, f_n $, i.e.,
\begin {equation} \label{aword}
f_ {i_1}^{\varepsilon_1} \cdots f_{i_s}^{\varepsilon_s}, \quad \mbox
{where $ \varepsilon_j\in \mathbb Z $}
\end {equation}
  (uniquely defined if monomial \eqref{aword} is reduced).
\end{proof}

\end{subsec}

\begin{subsec}{\bf On the kernel of the constructed action of
\boldmath${\rm Aut}(F_n)$. } It follows from statements (a), (b), (g) of Proposition  \ref{pro}
that if $\sigma \in {\rm Aut}(F_n) $, then
$\sigma_ {X}\in {\rm Sym}(X)$
with $(\sigma_X)^{-1}=(\sigma^{-1})_X$, and the map
\begin{equation}\label{action2}
{\rm Aut}(F_n)\to {\rm Sym}({X}),\quad \sigma\mapsto(\sigma^{-1})_X,
\end{equation}
is a group homomorphism.
Moreover, if $ G $ is an algebraic group or a real Lie group, then
$ \sigma_X \in {\rm Aut} ({\underline X}) $ and we obtain a group homomorphism

\begin{equation}\label{restri}
{\rm Aut}(F_n)\to {\rm Aut}({\underline X}),\quad \sigma\mapsto(\sigma^{-1})_X.
\end{equation}


\begin{theorem}\label{theo} \

\begin{enumerate}[\hskip 4.2mm\rm(a)]
\item If the group $G$ is solvable and $n\geqslant 3$, then homomorphism
 {\rm \eqref{action2}} is not an embedding.
    \item If the group $G$ is nonsolvable, then homomorphism {\rm \eqref{restri}} is an embedding provided that
     any of the following two properties hold:
     \begin{enumerate}[\hskip 0mm\rm(i)]
     \item[$({\rm b}_1)$] $G$ is a connected \textup(not necessarily affine\textup) algebraic group;
     \item[$({\rm b}_2)$] $G$ is a connected real Lie group.
     \end{enumerate}
\end{enumerate}
\end{theorem}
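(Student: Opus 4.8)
The plan is to translate membership in the kernel of \eqref{action2} or \eqref{restri} into a statement about group identities (laws) of $G$. Writing $\phi_x\colon F_n\to G$ for the homomorphism $f_j\mapsto g_j$ attached to a point $x=(g_1,\dots,g_n)$, we have $w(x)=\phi_x(w)$, and since $\phi_x$ is a homomorphism, $\tau(f_j)(x)=f_j(x)$ for all $x$ if and only if the word $u_j:=\tau(f_j)f_j^{-1}\in F_n$ is a law of $G$, i.e. $u_j(x)=e$ for all $x\in X$. As a morphism of varieties over $k$ (resp. a map of sets) is determined by its effect on the points of $X=G^n$, the map $\tau_X$ equals the identity precisely when every $u_j$ is a law of $G$. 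Replacing $\sigma$ by $\sigma^{-1}$, we see that \eqref{action2} (resp. \eqref{restri}) fails to be an embedding iff there is a nontrivial $\tau\in{\rm Aut}(F_n)$ with all $u_j$ laws of $G$, and is an embedding iff the only such $\tau$ is $1$.

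For (a) I would exhibit such a nontrivial $\tau$ explicitly. Let $d$ be the derived length of the solvable group $G$, so that every element of the $d$-th derived subgroup $F_n^{(d)}$ is a law of $G$, because $\phi_x(F_n^{(d)})\subseteq G^{(d)}=\{e\}$. Since $n\geqslant 3$, the subgroup $\langle f_2,\dots,f_n\rangle$ is free of rank $n-1\geqslant 2$, hence nonsolvable, so its $d$-th derived subgroup contains a nontrivial element $c$; moreover $c\in\langle f_2,\dots,f_n\rangle^{(d)}\subseteq F_n^{(d)}$. Define $\tau\in{\rm End}(F_n)$ by $\tau(f_1)=f_1c$ and $\tau(f_j)=f_j$ for $j\geqslant 2$. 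As $c$ does not involve $f_1$, the endomorphism with $f_1\mapsto f_1c^{-1}$ and $f_j\mapsto f_j$ is a two-sided inverse, so $\tau\in{\rm Aut}(F_n)$ and $\tau\neq 1$. Here $u_1=f_1cf_1^{-1}$ and $u_j=1$ for $j\geqslant 2$; since $F_n^{(d)}$ is normal, $u_1\in F_n^{(d)}$ is a law of $G$, whence $\tau_X={\rm id}$ and \eqref{action2} is not an embedding.

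For (b) the reduction above shows it suffices to prove that a connected nonsolvable group $G$ of the stated type satisfies no nontrivial identity, for then every $u_j=1$, so $\tau(f_j)=f_j$ and $\tau=1$. Since laws are inherited by subgroups and by quotients, it is enough to produce a nontrivial semisimple subquotient of $G$ and invoke the fact that such a group has no nontrivial law. In case $({\rm b}_1)$ I would first apply Chevalley's structure theorem to the connected algebraic group $G$, obtaining an extension $1\to H\to G\to A\to 1$ with $H$ connected affine normal and $A$ an abelian variety; as $A$ is commutative, $H$ is again nonsolvable. Passing to the reductive quotient $\overline H:=H/R_u(H)$, still nonsolvable since $R_u(H)$ is solvable, and then to its derived group $[\overline H,\overline H]$, yields a nontrivial connected semisimple subquotient $S$ of $G$. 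By Borel's theorem, for every nontrivial $w\in F_m$ the word map $S^m\to S$ is dominant, hence not identically $e$, so neither $S$ nor $G$ can satisfy a nontrivial law. In case $({\rm b}_2)$ the Levi decomposition of the nonsolvable Lie algebra of $G$ provides a nontrivial semisimple analytic subgroup $S$; being a connected semisimple real Lie group, $S$ contains a nonabelian free subgroup, and evaluating a putative law on free generators forces it to be trivial.

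The main obstacle I anticipate is precisely the no-identity statement in case $({\rm b}_1)$ in positive characteristic: over $k=\overline{\mathbb F}_p$ a group such as $SL_2(k)$ is locally finite and contains no nonabelian free subgroup, so the Tits-alternative argument available in characteristic $0$ (and in the Lie case) breaks down. This is why I route the argument through Borel's dominance theorem for word maps on semisimple groups, which is characteristic-free, rather than through free subgroups; establishing and correctly applying this input, together with the subquotient reductions that preserve laws, is the crux of part (b).
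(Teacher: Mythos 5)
Your proof is correct and follows essentially the same route as the paper: in (a) you produce a Nielsen-type automorphism multiplying one generator by a nontrivial element of a deep derived subgroup of the free group on the remaining generators (the paper uses the explicit iterated commutator $\theta_s$ in $f_1,f_2$ and modifies $f_n$ instead of $f_1$), and in (b) you make the same Chevalley/radical reduction to a nontrivial connected semisimple group and then invoke Borel's dominance theorem for word maps in case $({\rm b}_1)$, resp.\ the existence of a rank-$n$ free subgroup in case $({\rm b}_2)$. The only cosmetic differences are your passage through $H/R_u(H)$ and its derived group instead of $G_{\rm aff}/{\rm Rad}(G_{\rm aff})$, and your detour through a Levi subgroup in $({\rm b}_2)$ where the paper cites Epstein's theorem directly for the nonsolvable connected Lie group.
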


\begin{proof} (a) Let  $n\geqslant 3$ and let the group $G$ be solvable,
i.e., the
descend\-ing
series of its successive commutator subgroups $G={\mathscr D}^0(G)\supseteq \cdots \supseteq
{\mathscr D}^i(G)\supseteq \cdots$ terminates  at
an $s$th step:
\begin{equation}\label{comm}
{\mathscr D}^s(G)\!=\!\{e\}.
\end{equation}

Setting $[a,b]:=aba^{-1}b^{-1}$, we define inductively the elements $\theta_i\in F_n$
as follows:
\begin{equation}\label{theta}
\begin{split}
\theta_1&:=[f_1, f_2],\\[-3.5mm]
\theta_i&:=[\theta_{i-1}, f_d], \;\;\mbox{where}\;\;d=\begin{cases}1&\text{if $i$ is even},\\
2&\text{if $i$ is odd}, \end{cases}
\;\;\mbox{for $i\geqslant 2$}.
\end{split}
\end{equation}

From \eqref{theta} it follows that $\theta_i$ for any $i$ is a nonempty
reduced word in $f_1$ and $f_2$.
Consider the elements $\sigma, \tau\in {\rm End}(F_n)$
given by the equalities
\begin{equation}\label{sigmatau}
\sigma(f_i):=
\begin{cases}
f_i&\text{if $i<n$,}\\
f_n\theta_s& \text{if $i=n$,}
\end{cases}\quad
\tau(f_i):=
\begin{cases}
f_i&\text{if $i<n$,}\\
f_n\theta_s^{-1}& \text{if $i=n$.}
\end{cases}
\end{equation}

Since $ n \geqslant 3 $, and $ \theta_s $ is a word only in $ f_1 $, $ f_2 $, it follows
from \eqref{sigmatau} that $ \sigma \circ \tau = \tau \circ \sigma = 1$.
Hence
$ \sigma $ is a nonidentity element of the group $ {\rm Aut}(F_n) $ with $ \sigma^{-1}
= \tau $.

Consider now an element \eqref{gh} from $ X $. In view of
\eqref {sigmatau},
  we have
\begin{equation}\label{fig}
\sigma(f_i)(x)=f_i(x)=g_i\quad\mbox{for $i<n$}
\end{equation}
and $\sigma(f_n)(x)=(f_n\theta_s)(x)$. It follows from \eqref{theta} that $\theta_s(x)\in \mathscr D^s(G)$. In view of \eqref{comm}, this yields $\theta_s(x)=e$. Therefore,
\begin{equation}\label{nn}
\sigma(f_n)(x)=f_n(x)=g_n.
\end{equation}
Thus, in view of  \eqref{gh}, \eqref{mor}, \eqref{fig},
\eqref{nn}, we have $ \sigma_X = {\rm id} $. Hence, $ \sigma $ is a non\-iden\-tity element of the kernel of  homomorphism \eqref{restri}. This proves statement\;(a).

(b)  Let $G$ be a nonsolvable group. Arguing by contradiction, suppose that homomorphism \eqref{restri} is not an embedding. Let
$ \sigma \in {\rm Aut}(F_n) $ be a nonidentity element of its kernel.
Since $\sigma\neq 1$, there is a number $i$ such that the reduced word
$\sigma(f_i)$ in $f_1,\ldots, f_n$ is different from $f_i$.
Therefore,
\begin{equation}\label{non}
w:=\sigma(f_i)f_i^{-1}
\end{equation}
is a nonidentity element of the group $F_n$.
According to the choice of $\sigma$,
  for every $x=(g_1,\ldots, g_n)\in G^{n}$, we have $\sigma_X(x)=x$, so \eqref{mor} gives $\sigma(f_i)(x) =g_i=f_i(x)$. Therefore,
\begin{equation}\label{we}
w(x)=e\;\;\mbox{for every $x\in G^{n}$}.
\end{equation}

{\it Case} $({\rm b}_1)$.  Let $G$ be a nonsolvable connected algebraic group. By Chevalley's theorem, the group $G$ contains
a largest connected affine normal subgroup
 $G_{\rm aff}$, and the group $G/G_{\rm aff}$ is an abelian variety.
 Since $G$ is nonsol\-v\-able and  $G/G_{\rm aff}$ is abelian (hence solvable), the group
$G_{\rm aff}$ is nonsolv\-able. Therefore, the group $G_{\rm aff}$ does not coincide with its
radical ${\rm Rad}(G_{\rm aff})$ and hence
$G_{\rm aff}/{\rm Rad}(G_{\rm aff})$ is a nontrivial connected semi\-simple algebraic group. In view statements (c) and (e) of Propo\-sition \ref{pro}, this reduces the proof  to the case where $G$ in $({\rm b}_1)$ is a nontrivial connected semisimple algebraic group.
We will therefore further assume that
this condition is met.

If $n=1$, then ${\rm Aut}(F_n)$ is the group of order two and
 $\sigma(f_1)=f_1^{-1}$, hence $g=\sigma_X(g)=g^{-1}$ for every $g\in G$. Therefore, the group $G$ is commutative,  which contradicts its unsolvability.

So $n\geqslant 2$. Then it follows from \cite[Thm.\,B]{2} and \eqref{non} that the morphism
\begin{equation*}\label{boldw}
{G}^{n}\to {G},\quad x\mapsto w(x)
\end{equation*}
is dominant. In view of \eqref{we}, this means that the group $G$ is trivial.
The resulting contradiction completes the proof in the case of $({\rm b}_1)$.

{\it Case} $({\rm b}_2)$. Let $G$ be a nonsolvable connected real Lie group. Then, according to \cite[Thm.]{12-11}, the group $G$ contains a free subgroup $S$ of rank $n$.
Let $g_1, \ldots, g_n$  be a free generating system of the group $S$. Then it follows from
\eqref{we} that $w(g_1,\ldots, g_n)=e$, i.e., $w$ is a nontrivial relation between
$g_1,\ldots, g_n$, which is impossible.
The resulting contradiction completes the proof in the case of
  $({\rm b}_2)$.

  This proves state\-ment\;(b).
\end{proof}

\begin{remark} If the field $k$ is uncountable, then, according to \cite[Thm. 1.1]{10-4}, \cite[App. D]{13-5}, in the case of $({\rm b}_1)$,
the group $G$ contains a free subgroup of any finite rank, hence
 the same proof as in the case of $({\rm b}_2)$ above is applicable. It
was given in the first version of this paper \cite{22}. However, in the general case, such a proof fails, since $G$ may not contain a free subgroup (for example, if $k$ is the algebraic closure of a finite field, then the order of every element in ${\rm SL}_d$
is finite). The author is grateful to N. L. Gordeev, who drew his attention to Borel's theorem \cite[Thm.\,B]{2}, which allows us, by changing the proof, to remove
the assumption that the field\;$k$ is uncountable.
\end{remark}

\end{subsec}


\begin{subsec}{\bf Corollaries and examples.}   Corollaries \ref{cor1} and \ref{Cr} formulated below
 follow from Theorem \ref{theo} in view of the
 next Proposition \ref{linea}:

\begin{proposition}\label{linea} Let $H$ be a group containing ${\rm Aut}(F_n)$. Then
the follow\-ing hold:
\begin{enumerate}[\hskip 4.2mm\rm(a)]
\item The group $H$ contains the group ${\rm Aut}(F_s)$ for every $s\leqslant n$.
\item If $n\geqslant 2$, then the group $H$ contains the group $F_m$ for every $m$ and is nonamen\-able.
\item If $n\geqslant 3$, then the group  $H$ is nonlinear.
\item If $n\geqslant 3$, then the group $H$ contains the braid group $B_n$ on $n$ strands.
\end{enumerate}
\end{proposition}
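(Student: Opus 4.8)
The plan is to reduce every assertion to a corresponding property of the subgroup ${\rm Aut}(F_n) \subseteq H$ and then transfer it upward. This works because each property in question is monotone in the appropriate direction: containing a fixed group is transitive, so it passes from ${\rm Aut}(F_n)$ to $H$; and both amenability and linearity are inherited by subgroups, so their negations pass from the subgroup ${\rm Aut}(F_n)$ to the overgroup $H$. Hence it suffices to locate the required subgroups inside ${\rm Aut}(F_n)$, or to invoke nonamenability and nonlinearity of ${\rm Aut}(F_n)$ itself.

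For (a) I would use the standard embedding ${\rm Aut}(F_s) \hookrightarrow {\rm Aut}(F_n)$ for $s \leqslant n$: identifying $F_s$ with the subgroup $\langle f_1, \ldots, f_s \rangle$ of $F_n$, each $\alpha \in {\rm Aut}(F_s)$ extends to an automorphism of $F_n$ fixing $f_{s+1}, \ldots, f_n$, and this extension is injective because an extension that is the identity on $F_n$ already fixes $f_1, \ldots, f_s$. Composing with ${\rm Aut}(F_n) \subseteq H$ gives ${\rm Aut}(F_s) \subseteq H$. For (b), the key observation is that for $n \geqslant 2$ the center ${\mathscr C}(F_n)$ is trivial, so the inner automorphism group ${\rm Inn}(F_n) \cong F_n$ is a subgroup of ${\rm Aut}(F_n)$, hence of $H$. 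Since $F_n \supseteq F_2$ and $F_2$ contains a free subgroup of every rank $m$ (indeed of countable rank), $H$ contains $F_m$ for all $m$; and as $H$ then contains the nonamenable group $F_2$ while amenability passes to subgroups, $H$ is nonamenable.

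Parts (c) and (d) are where I would appeal to classical structural results rather than build anything by hand. For (d), I would invoke Artin's faithful representation $B_n \hookrightarrow {\rm Aut}(F_n)$, under which the standard generator $\sigma_i$ acts by $f_i \mapsto f_i f_{i+1} f_i^{-1}$, $f_{i+1} \mapsto f_i$, and fixing the remaining generators; this places $B_n$ inside ${\rm Aut}(F_n) \subseteq H$. For (c), I would cite the theorem of Formanek and Procesi that ${\rm Aut}(F_n)$ is nonlinear for $n \geqslant 3$; since any subgroup of a linear group is linear, $H$ cannot be linear either.

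I expect the only genuinely nonelementary ingredient to be the Formanek--Procesi nonlinearity theorem underlying (c); everything else is either an explicit and routine subgroup construction or a citation of Artin's classical braid action. Thus the main obstacle, should one want a self-contained treatment, is reproving nonlinearity of ${\rm Aut}(F_n)$ for $n \geqslant 3$, which is a substantial result in its own right.
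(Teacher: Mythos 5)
Your proposal is correct and follows essentially the same route as the paper: part (a) via the standard extension embedding ${\rm Aut}(F_s)\hookrightarrow{\rm Aut}(F_n)$, part (b) via triviality of ${\mathscr C}(F_n)$ and ${\rm Inn}(F_n)\cong F_n$ together with the fact that free groups of rank $\geqslant 2$ contain free subgroups of every rank, part (c) via the Formanek--Procesi nonlinearity theorem, and part (d) via the faithful Artin action of $B_n$ on $F_n$ (the paper cites Magnus--Karrass--Solitar for exactly this). The only difference is that you spell out details the paper leaves as citations or as ``clearly.''
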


\begin{proof}
Clearly the group ${\rm Aut}(F_n)$ contains ${\rm Aut}(F_s)$ for every $s\!\leqslant\! n$.
This gives\;(a).

If $n\geqslant 2$, then the group ${\mathscr C}(F_n)$  is trivial (see \cite[Chap.\,I, Prop. 2.19]{24-13}). Therefore, the group ${\rm Int}(F_n)$
is isomorphic to $F_n$, and hence  (see \cite[Chap.\,I, Prop. 3.1]{24-13})
contains $F_m$ for every $m$.
This gives (b).

If $n\!\geqslant\! 3$, then the group ${\rm Aut}(F_n)$ is nonlinear  (see \cite{5-12}). This gives\;(c).

If $n\!\geqslant\! 3$, then the group ${\rm Aut}(F_n)$ contains  $B_n$ (see \cite[Chap.\,3, 3.7]{18-14}). This gives\;(d).
\end{proof}

\begin{corollary}\label{cor1}
Let
$G$ be
 either a connected algeb\-raic group
or a con\-nec\-ted real Lie group, and let $G$ be nonsolvable. Then for any positive integers $n$ and $s\leqslant n $,
the following hold:
\begin{enumerate}[\hskip 4.2mm $\bullet$]
\item the group ${\rm Aut}(\underline{G^{n}})$
contains the group ${\rm Aut}(F_s)$;
\item for $n\geqslant 2$, the group ${\rm Aut}(\underline{G^{n}})$ contains the braid group $B_n$;
\item   for  $n\geqslant 3$, the group ${\rm Aut}(\underline{G^{n}})$ is nonlinear;
\item  for  $n\geqslant 2$, the group ${\rm Aut}(\underline{G^{n}})$ is nonamenable.
    \end{enumerate}
\end{corollary}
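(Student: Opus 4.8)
The plan is to reduce everything to the two results already in hand: Theorem~\ref{theo} supplies an embedding of ${\rm Aut}(F_n)$ into ${\rm Aut}(\underline{G^{n}})$, and Proposition~\ref{linea} then transfers the relevant structural consequences—containment of ${\rm Aut}(F_s)$ and of braid groups, nonlinearity, nonamenability—from ${\rm Aut}(F_n)$ to any overgroup. So the corollary should follow by applying Proposition~\ref{linea} to $H:={\rm Aut}(\underline{G^{n}})$, once we verify that $H$ contains ${\rm Aut}(F_n)$.

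First I would secure that embedding. By hypothesis $G$ is nonsolvable and is either a connected algebraic group or a connected real Lie group, so precisely one of conditions $({\rm b}_1)$, $({\rm b}_2)$ of Theorem~\ref{theo}(b) is satisfied. Hence homomorphism~\eqref{restri}, namely ${\rm Aut}(F_n)\to{\rm Aut}(\underline{G^{n}})$, is an embedding. Identifying ${\rm Aut}(F_n)$ with its image under this monomorphism, we conclude that $H={\rm Aut}(\underline{G^{n}})$ contains ${\rm Aut}(F_n)$, which is exactly the hypothesis required to invoke Proposition~\ref{linea}.

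With the embedding established, the four bullets fall out directly. The first bullet is Proposition~\ref{linea}(a): $H$ contains ${\rm Aut}(F_s)$ for every $s\leqslant n$. Nonlinearity for $n\geqslant 3$ is Proposition~\ref{linea}(c), and nonamenability for $n\geqslant 2$ is part of Proposition~\ref{linea}(b). For the braid group I would split on the value of $n$: when $n\geqslant 3$ the containment of $B_n$ in $H$ is Proposition~\ref{linea}(d); the only remaining case is $n=2$, where $B_2\cong\mathbb Z\cong F_1$, and Proposition~\ref{linea}(b) (applicable since $n=2\geqslant 2$) guarantees that $H$ contains $F_m$ for every $m$, in particular $F_1$, and hence $B_2$. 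Thus $H$ contains $B_n$ for all $n\geqslant 2$.

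There is no substantive obstacle: all of the real work has already been carried out in Theorem~\ref{theo} and Proposition~\ref{linea}. The only point demanding a moment's care is the bookkeeping with indices—the corollary asserts the braid-group containment for $n\geqslant 2$, whereas Proposition~\ref{linea}(d) is stated only for $n\geqslant 3$—so the borderline case $n=2$ must be handled separately, as above, through the isomorphism $B_2\cong\mathbb Z$ together with part~(b).
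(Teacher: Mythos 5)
Your proposal is correct and follows exactly the route the paper intends: the paper gives no separate proof of Corollary~\ref{cor1} beyond stating that it follows from Theorem~\ref{theo} combined with Proposition~\ref{linea}, which is precisely your argument. Your explicit treatment of the borderline case $n=2$ for the braid group via $B_2\cong\mathbb Z\cong F_1$ and Proposition~\ref{linea}(b) is a correct and welcome filling-in of a detail the paper leaves implicit, since Proposition~\ref{linea}(d) covers only $n\geqslant 3$.
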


\begin{remark} The action of $ G $ on itself by left or right translations
defines an embedding of $ G $ into $ {\rm Sym}(G) $. Therefore,
  $ {\rm Sym} (G^{n}) $ contains $ G^{n} $.
If $ G $ is a connected algebraic group
or a real Lie group, then this construction shows that
  $ {\rm Aut} (\underline {G^{n}}) $ contains the group $ G^{n} $, which acts on
  $ \underline {G^{n}} $ simply transitively. If, moreover, $ G $ is nonsolvable,
  then, as noted in the proof of Theorem \ref{theo}, it contains $ F_m $ for any $ m $ and, therefore, is nonamenable.
  Hence the same is true for the group
  $ {\rm Aut} (\underline {G^{n}}) $.
\end{remark}

\begin{example}
If we take $ G = {\rm SL}_2(k) $,
then
$ \underline {G^{n}} $ is isomorphic to $ Q^n:=Q \times \cdots \times Q $ ($ n $ factors), where
$ Q $ is the affine quadric in $ \mathbb A^4 $ defined by the equation $ x_1x_2 + x_3x_4 = 1 $. Thus, according to Corollary \ref{cor1},
the automorphism group of the irreducible smooth affine
algebraic variety $Q^n$
contains $ {\rm Aut}(F_s) $ for any $ s \leqslant n $, and if
 $ n \geqslant 3 $, then it is nonlinear and contains the braid group $ B_n $.
The same is true if, for $ {\rm char} (k) \neq 2 $, one replaces $ Q $ with $ Q / Z $, where $ Z $ is the  group of order two generated by the involution $ ( a, b, c, d) \mapsto (-a, -b, -c, -d) $: this follows from Corol\-lary \ref{cor1} for $ G = {\rm PSL}_2 (k) $.
\end{example}

\begin{example} The following construction \cite[Sect. 2, Example 4] {7-17} gives an
embedding of the group $ {\rm Aut}(F_n) $ into the automorphism group  of an affine open subset of the affine space $ \mathbb A^m $ for some $ m $, and hence into the Cremona group
$ {\rm Cr}_m$ of rank $ m $.
In principle, it allows one to describe the birational transformations
of the space $ \mathbb A^m $ that lie in the image of ${\rm Aut}(F_n) $ by means of  the explicit formulas.

Namely, consider a $d $-dimensional associative $ k $-algebra $ A $ with an identity element.
Having fixed its basis, we identify the set $ A $ with $ \mathbb A^d $. The group $ A^* $ of invertible elements of the algebra $ A $ is a connected affine algebraic group whose underlying variety is an  affine open subset of $ \mathbb A^d $. If $ A^* $ is nonsolvable, then formulas \eqref{mor}, \eqref{restri} for $ G = A^* $ define an embedding of
$ {\rm Aut}(F_n) $ into the Cremona group of rank $ nd $. For example, this is so for
$$ A = {\rm Mat}_{s \times s} (k), \quad s \geqslant 2.$$
In this case, $ A^* $ is the nonsolvable group $ {\rm GL}_s $, and explicit descrip\-tion of birational transformations $ \sigma_X $ in the coordinates is reduced to
multiplying functional matrices and their inverses.
\end{example}

\begin{corollary}\label{Cr}  For any integers $ n\geqslant 1 $,
$ r \geqslant 3n $, and $ 2\leqslant s \leqslant n $,
the Cremona group  $ {\rm Cr}_r $ of rank $ r $ contains the group $ {\rm Aut} (F_s) $, and
if $ n \geqslant 3 $,  the braid group $ B_n $ on $n$ strands.
\end{corollary}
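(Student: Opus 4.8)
The plan is to deduce the corollary from Corollary~\ref{cor1} by choosing $G$ to be a nonsolvable connected algebraic group of the smallest possible dimension, and then passing from biregular to birational transformations. The smallest dimension of a nonsolvable connected algebraic group is $3$: every connected algebraic group of dimension $\leqslant 2$ is solvable, whereas $G={\rm SL}_2$ (or, when ${\rm char}\,k\neq 2$, ${\rm PSL}_2$) is nonsolvable of dimension $3$. Fixing such a $G$, the variety $\underline{G^{n}}$ is irreducible of dimension $3n$, and Corollary~\ref{cor1} applies to it.

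First I would record that $\underline{G^{n}}$ is rational. As a variety, ${\rm SL}_2$ is the smooth affine quadric $\{x_1x_2+x_3x_4=1\}\subset\mathbb A^4$, which is rational (project from a point of it); hence $G$ is a rational variety of dimension $3$, and therefore so is the product $\underline{G^{n}}$ of $n$ copies of $G$. Consequently, any birational isomorphism $\underline{G^{n}}\dashrightarrow\mathbb A^{3n}$ induces, by conjugation, a group isomorphism ${\rm Bir}(\underline{G^{n}})\cong{\rm Bir}(\mathbb A^{3n})={\rm Cr}_{3n}$.

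Next I would observe that every biregular automorphism of $\underline{G^{n}}$ is in particular a birational self-map, and that an automorphism restricting to the identity on a dense open subset is the identity; this yields an embedding ${\rm Aut}(\underline{G^{n}})\hookrightarrow{\rm Bir}(\underline{G^{n}})\cong{\rm Cr}_{3n}$. By Corollary~\ref{cor1}, the group ${\rm Aut}(\underline{G^{n}})$ contains ${\rm Aut}(F_s)$ for every $s\leqslant n$ and, when $n\geqslant 3$, the braid group $B_n$; hence ${\rm Cr}_{3n}$ contains these subgroups as well. Finally, for any $r\geqslant 3n$ there is a standard embedding ${\rm Cr}_{3n}\hookrightarrow{\rm Cr}_{r}$, obtained by extending a birational self-map of $\mathbb A^{3n}$ to $\mathbb A^{r}=\mathbb A^{3n}\times\mathbb A^{r-3n}$ by the identity on the second factor; composing with it carries ${\rm Aut}(F_s)$ and $B_n$ into ${\rm Cr}_{r}$, which proves both assertions (the cases $s<2$ and $n<3$ being vacuous).

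The entire substantive input is Corollary~\ref{cor1}, and behind it Theorem~\ref{theo}; the rest is bookkeeping with rational varieties and their groups of birational self-maps. The only point demanding care, and the source of the sharp bound $3n$ (rather than the $4n$ that $G={\rm GL}_2$ in the preceding Example would give), is the choice of a nonsolvable connected group of minimal dimension $3$ together with its rationality. I expect no genuine obstacle beyond confirming these two standard facts about low-dimensional algebraic groups and about ${\rm Bir}$ of a rational variety.
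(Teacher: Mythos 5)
Your proposal is correct and follows essentially the same route as the paper: take $G={\rm SL}_2$, use the rationality of $\underline{G^{n}}$ (the paper cites \cite[Cor.\,14.14]{9-2}, you argue directly via the quadric) to get ${\rm Aut}(\underline{G^{n}})\hookrightarrow{\rm Cr}_{3n}$, apply Corollary~\ref{cor1}, and finish with the inclusion ${\rm Cr}_{3n}\hookrightarrow{\rm Cr}_{r}$. The extra detail you supply (minimality of $\dim G=3$, the explicit projection from the quadric, the identity extension to $\mathbb A^{r}$) is just an elaboration of the same argument.
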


\begin{proof}
Let $ G $ be the group $ {\rm SL}_2$. The variety
$\underline {G^n}$
is rational \cite[Cor. 14.14]{9-2}. Since $\dim G=3$, this shows that
$ {\rm Cr}_ {3n} $ contains
$ {\rm Aut}(\underline {G^{n}}) $. The statement now follows from
Corollary \ref{cor1} and the fact that $ {\rm Cr}_ {r + 1} $ contains $ {\rm Cr}_ {r} $
for any $ r $.
\end{proof}

\begin{remark} Denote by $b_n$ the minimal rank of the Cremona groups containing the braid group $B_n$ on $n$ strands.
In \cite{5-12a}, an embedding of the braid group $B_n$ into the group ${\rm GL}_{n(n-1)/2}$ is constructed, which implies the inequality
$b_n\leqslant n(n-1)/2$.  For $n\geqslant 8$, Corollary \ref{Cr} gives
a stronger upper bound: $$b_n\leqslant 3n.$$
\end{remark}

\end{subsec}

\begin{subsec}{\bf Groups nonembeddable into the Cremona groups.}  Since Co\-rollary \ref{Cr}
concerns subgroups of the Cremona group,
it is appropriate to complement it here with
the following remark on Cantat's question
about these subgroups.

\begin{remark}
In \cite {3-9}, are given examples of finitely-generated (and even finitely-presented) groups that do not admit embeddings into any Cremona group, which gives an answer to  Cantat's question about the existence of such groups (see also \cite{ 29-7}). These examples are based on Theorem 1.2 of \cite{4-10}, according to which the word problem is solvable in every finitely-generated subgroup of any Cremona group. However, the answer to the above question\,---\,and even in a stronger form, with the addition of the condition of simplicity of the subgroup\,---\,can be obtained
without using this theorem.

Namely, recall \cite{60-19} that an abstract group $H$ is called {\it Jordan} if
there exists a finite set $\mathcal F$ of finite groups such that every finite subgroup of $H$ is an extension of an Abelian group by a
group taken from $\mathcal F$.
According to \cite [p.\,188, Ex\-ample 6]{60-19}, Richard Thompson's group $ V $ is an example of a non-Jordan finitely presented group. Since any Cremona group is Jordan (see \cite[Cor.\,1.5]{1-1}, \cite{23}), the group $ V $ cannot be embedded in it. Moreover,
in addition to this property, $ V $ is simple, and therefore, any homomorphism of the group
$ V $ into a Cremona group is trivial (unlike \cite{4-10}, this proves Corollary 1.4
of \cite{4-10}
without usage of the
amplification of  the
Boone--Novikov construction  obtained in \cite{8-15}).
\end{remark}
\end{subsec}

\begin{subsec}{\bf Descending the action of \boldmath${\rm Aut}(F_n)$ to a quotient variety. } Let $ G $ be a connected (not necessarily affine) algebraic group.
Below we show that the variety $ X = \underline {G^{n}} $ is only one of the ``extreme'' cases in a family of algebraic varieties related to $ X $ and endowed with a natural action of the group
$ {\rm Aut}(F_n) $. For some of them (but not for all), this action is faithful, which gives
new examples of algebraic varieties,
the automorphism group of which contains $ {\rm Aut}(F_n) $, and therefore, by virtue of Proposition \ref{linea}, for $ n\geqslant 2 $, is nonamenable, and for $ n \geqslant 3 $, nonlinear and contains the braid group $ B_n $. Unlike $ X $,
they are no longer presented in the form of a Cartesian power of an algebraic variety,
not necessarily smooth and
endowed with a simply  transitive action of the group $ G^{n} $,
and among those of them that are affine, there are nonrational (and even not stably rational).

To obtain such varieties
one can, first, replace $ X $ with an appropriate $ {\rm Aut}(F_n) $-invariant open subset
$ U $ of $ X $ (in general, $U$ is not affine, even if $ G $ is affine). Such subsets do exist. For example, the set of points whose $ G $-orbits have maximal dimension with respect to the diagonal action of $ G $ on $ X $ by conjugation is open in $ X $, and from
Proposition \ref{pro}(g) it follows that it is ${\rm Aut}(F_n) $-invariant. If the action of $ {\rm Aut} (F_n) $ on $X $ is faithful, then the openness condition of $ U $ ensures  faithfulness of the action of $ {\rm Aut}(F_n) $ on $ U $.

Next, the transition from $ X $ to $ U $ can be complemented with the following construction.
Consider a closed subgroup $ S $ in $ G $ and its diagonal action on $ X $ by conjugation. Suppose that $ X $ contains an open subset $ U $ that is invariant under both
$ {\rm Aut}(F_n) $ and $S $ and admits a geometric or categorical quotient under the action of $ S $. Then, in view of Proposition \ref{pro}(g), the action of
$ {\rm Aut}(F_n) $ on $ U $ descends to the quotient variety. The results obtained below
show that under certain conditions
this action is faithful.

Below we explore two cases, in which $ U = G $. In the first, the group $ G $ is affine, and the group $ S $ is reductive, while in the second, no restrictions are imposed on the group $ G $, and the group $ S $ is finite. In the corresponding criteria for an element $ \sigma
\in {\rm Aut} (F_n) $ to belong to the kernel of the action of the group $ {\rm Aut}(F_n) $ on the quotient variety, we use the following notation:

If $w\in F_n$, $a\in G$, and $i\in\{1,\ldots, n\}$, we put
\begin{equation}\label{worddn}
X_{w, a, i}:=\{x=(g_1,\ldots, g_n)\in X\mid w(x)a^{-1}g_i^{-1}a=e\}.
\end{equation}
We have $(e,\ldots, e)\!\in\! X_{w, a, i}$,
therefore, $X_{w, a, i}\!\neq\! \varnothing$.
Being the fiber of the morphism
$$X\to G, \quad x=(g_1,\ldots, g_n)\mapsto w(x)a^{-1}g_i^{-1}a$$
over the point $e$, the set $X_{w, a, i}$
is closed in $X$.
\end{subsec}

\begin{subsec}{\bf Descending the action of \boldmath${\rm Aut}(F_n)$ to \boldmath${X}/\!\!/S$. }
Turning to the first case, we assume that $ G $ is a connected affine algebraic group, and $ S $ is its closed reductive subgroup.

Consider the diagonal action of the group $S$ on the affine algebraic variety $X=G^{n}$ by conjugation. Then the
$k$-algebra $k[{X}]^S$ is finitely generated, and if
 ${X}/\!\!/S$ is the affine algebraic variety with
  $k[{X}/\!\!/S]=k[{X}]^S$, and
$$\pi\colon {X}\to {X}/\!\!/S,$$
is the morphism determined by the identity
embedding $k[{X}]^S\hookrightarrow k[{X}]$, then the pair $(\pi, {X}/\!\!/S)$
is the categorical quotient for the action
of  $S$ on ${X}$ (see \cite{19-16}).
In view of Proposition \ref{pro}(g), any comorphism $\sigma_X^*$
preserves the al\-gebra  $k[X]^S$. Therefore, it indices an automorphism
$\sigma_{X/\!\!/S}\!\in\! {\rm Aut}(X/\!\!/S)$, for which
\begin{equation}\label{pssp}
\pi\circ\sigma_X=\sigma_{X/\!\!/S}\circ\pi.
\end{equation}

It arises  a homomorphism
\begin{equation}\label{action3}
{\rm Aut}(F_n)\to {\rm Aut}({X/\!\!/S}), \quad \sigma\mapsto (\sigma^{-1})_{X/\!\!/S}.
\end{equation}
Its kernel is described as follows:
\begin{lemma}\label{categ}
Let $G$ be a connected affine algebraic group, and let  $S$ be its closed reductive subgroup.
The following properties of an element
$\sigma\in {\rm Aut}(F_n)$ are equivalent:
\begin{enumerate}[\hskip 4.2mm\rm(a)]
\item
$\sigma$ lies in the kernel of homomorphism \eqref{action3}.
\item Every closed $S$-orbit lies in the set
\begin{equation}\label{formu}
\textstyle\bigcup_{s\in S}\Big(\bigcap_{i=1}^n X_{\sigma(f_i), s, i}\Big).
\end{equation}
\end{enumerate}
\end{lemma}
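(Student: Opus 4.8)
The plan is to decode both conditions into statements about the diagonal $S$-action on $X$ and then match them using standard geometric invariant theory. First I would restate (a). By \eqref{pssp} we have $\pi\circ\sigma_X=\sigma_{X/\!\!/S}\circ\pi$, and since $\pi$ is surjective (every point of $X/\!\!/S$ is the image of a unique closed $S$-orbit), the automorphism $\sigma_{X/\!\!/S}$ equals the identity if and only if $\pi\circ\sigma_X=\pi$. Because the kernel of a homomorphism is closed under inversion and $\sigma\mapsto\sigma_{X/\!\!/S}$ differs from \eqref{action3} only by this inversion, condition (a) is equivalent to the identity $\pi(\sigma_X(x))=\pi(x)$ holding for all $x\in X$.

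Next I would unwind the set \eqref{formu}. Writing $x=(g_1,\dots,g_n)$, the defining equation of $X_{\sigma(f_i),s,i}$ in \eqref{worddn} reads $\sigma(f_i)(x)=s^{-1}g_is$, so by \eqref{mor} a point $x$ lies in $\bigcap_{i=1}^n X_{\sigma(f_i),s,i}$ precisely when $\sigma_X(x)=(s^{-1}g_1s,\dots,s^{-1}g_ns)$, i.e. when $\sigma_X(x)$ is the image of $x$ under the diagonal conjugation by $s$. Consequently the set \eqref{formu} is exactly $\{x\in X : \sigma_X(x)\in S\cdot x\}$, where $S\cdot x$ denotes the $S$-orbit of $x$, and condition (b) says precisely that $\sigma_X(x)\in S\cdot x$ for every $x$ lying in a closed $S$-orbit.

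It remains to prove that $\pi\circ\sigma_X=\pi$ on all of $X$ is equivalent to $\sigma_X(x)\in S\cdot x$ for all $x$ in closed orbits; this is the technical heart. Here I would use that $\sigma_X$ is an $S$-equivariant automorphism of the variety $\underline X$ --- equivariance is Proposition \ref{pro}(g), and invertibility as a morphism follows from parts (a), (b), (h) --- so $\sigma_X$ is a homeomorphism carrying each orbit $S\cdot x$ onto $S\cdot\sigma_X(x)$, hence carrying closed orbits to closed orbits and commuting with the formation of orbit closures. I would then invoke the basic facts about $\pi\colon X\to X/\!\!/S$ for the reductive group $S$: every fiber of $\pi$ contains a unique closed orbit, and $\pi(x)=\pi(y)$ holds iff $\overline{S\cdot x}$ and $\overline{S\cdot y}$ contain the same closed orbit. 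For the forward direction, if $x$ lies in a closed orbit then $S\cdot\sigma_X(x)$ is again closed and, by $\pi(\sigma_X(x))=\pi(x)$, lies in the same fiber as $S\cdot x$; uniqueness of the closed orbit in that fiber forces $S\cdot\sigma_X(x)=S\cdot x$, giving $\sigma_X(x)\in S\cdot x$. For the converse, given an arbitrary $y$ I would choose $x_0$ in the unique closed orbit contained in $\overline{S\cdot y}$, so $\pi(x_0)=\pi(y)$; applying the homeomorphism $\sigma_X$ gives $\sigma_X(x_0)\in\overline{S\cdot\sigma_X(y)}$, and since by hypothesis $S\cdot\sigma_X(x_0)=S\cdot x_0$ is closed, this closed orbit lies in both $\overline{S\cdot\sigma_X(y)}$ and the fiber over $\pi(y)$, whence $\pi(\sigma_X(y))=\pi(x_0)=\pi(y)$. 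Combining the three steps yields the equivalence of (a) and (b).

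I expect the main obstacle to be this third step: correctly passing between the condition on all of $X$ and the condition on closed orbits only, which relies on the unique-closed-orbit-per-fiber structure of the quotient together with the fact that $\sigma_X$, being an $S$-equivariant automorphism, preserves closedness of orbits and commutes with orbit closures. The bookkeeping in the second step --- the inversion relating \eqref{action3} to $\sigma_{X/\!\!/S}$, and the precise convention in \eqref{worddn} --- is routine but must be carried out carefully.
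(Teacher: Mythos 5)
Your argument is correct and follows essentially the same route as the paper's: both rest on the surjectivity of $\pi$, the $S$-equivariance of the automorphism $\sigma_X$, and the fact that each fiber of $\pi$ contains a unique closed $S$-orbit, so that $\sigma_{X/\!\!/S}$ fixes a point $b$ exactly when $\sigma_X$ preserves the closed orbit in $\pi^{-1}(b)$. Your handling of the converse (locating the closed orbit inside $\overline{S\cdot y}$) is just a more explicit spelling-out of the paper's observation that $\sigma_X$ restricts to an $S$-equivariant isomorphism $\pi^{-1}(b)\to\pi^{-1}(\sigma_{X/\!\!/S}(b))$ carrying closed orbit to closed orbit.
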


\begin{proof} The morphism $\pi$ is surjective, its fibers are $S$-inva\-riant, and for every point
$b\in X/\!\!/S$,
the fiber $\pi^{-1}(b)$
contains a unique closed  $S$-orbit $\mathcal O_b$
(see  \cite[\S2 and Append.\,1B]{19-16}).
It follows from \eqref{pssp} that the restriction of the morphism  $\sigma_X$ to the fiber $\pi^{-1}(b)$ is an $S$-equivariant isomorphism  between it and the fiber  $\pi^{-1}(\sigma_{X/\!\!/S}(b))$. In view of the uniqueness of closed orbits in the fibers, this means that
$\sigma_X(\mathcal O_b)=\mathcal O_{\sigma_{X/\!\!/S}(b)}$. Therefore, $\sigma_{X/\!\!/S}(b)=b$ if and only if  $\sigma_X(\mathcal O_b)=\mathcal O_b$.
In view of \eqref{mor} and definition \eqref{worddn},
this implies the equivalence of (a) and (b).
\end{proof}

\end{subsec}

\begin{subsec}{\bf Kernel of the action of \boldmath${\rm Aut}(F_n)$
on \boldmath${X}/\!\!/G$. } In the situation under consideration,
two extreme cases occur.

The first is the case of $ S = \{e\} $. It is considered in Theorem \ref{theo}, which shows that,
depending on $ G $ and $ S $,
 both possibilities are realized for homomorphism \eqref {action3}: in one, \eqref{action3} is an embedding, and in the other, it is not.

The second  is the case of $ S = G $ (so that $ G $ is reductive). It is considered in the following theorem.

\begin{theorem}\label{S=G} Let $G$ be a connected reductive algebraic group, and let  $S=G$.
\begin{enumerate}[\hskip 4.2mm\rm (a)]
\item If $n\geqslant 2$, then homomorphism \eqref{action3} is not an embedding.
\item If $n=1$, then the following properties are equivalent:
\begin{enumerate}[\hskip 0mm\rm (i)]
\item[$\rm(b_1)$] homomorphism \eqref{action3} is an embedding;
\item[$\rm(b_2)$]
the group $G$ contains a connected simple normal subgroup
of any of the following types:
\begin{equation}\label{type}
\mbox{${\sf A}_{\ell}$ with $\ell\geqslant 2$,\;
${\sf D}_\ell$ with odd $\ell$, \;
${\sf E}_6$.}
\end{equation}
\end{enumerate}
\end{enumerate}
\end{theorem}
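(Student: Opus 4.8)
The plan is to treat the two parts by rather different means: part (a) by producing an explicit nonidentity kernel element coming from the inner automorphisms of $F_n$, and part (b) by feeding the kernel criterion of Lemma \ref{categ} into the representation theory of the conjugation quotient, where it becomes a question about the Weyl group answered by the classical behaviour of the longest element. For part (a), assume $n\geqslant 2$. The center of $F_n$ is then trivial, so the inner automorphism $\iota\in{\rm Aut}(F_n)$ given by $\iota(f_i)=f_1f_if_1^{-1}$ is nonidentity. By \eqref{mor} one has $\iota_X(x)=\bigl(f_1(x)g_1f_1(x)^{-1},\ldots,f_1(x)g_nf_1(x)^{-1}\bigr)$, i.e.\ $\iota_X(x)$ is the image of $x$ under diagonal conjugation by $f_1(x)=g_1\in G=S$. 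Hence $\iota_X(x)$ lies in the same $S$-orbit as $x$, so $\pi(\iota_X(x))=\pi(x)$; combined with \eqref{pssp} and the surjectivity of $\pi$ this gives $\iota_{X/\!\!/S}={\rm id}$. Thus $\iota$ is a nonidentity element of the kernel and \eqref{action3} is not an embedding. (This argument collapses for $n=1$, where $F_1$ is abelian and ${\rm Inn}(F_1)$ is trivial, which is exactly why that case is genuinely different.)

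For part (b), let $n=1$, so that ${\rm Aut}(F_1)$ is cyclic of order two with nonidentity element $\sigma$ satisfying $\sigma(f_1)=f_1^{-1}$; then $\sigma_X$ is the inversion $g\mapsto g^{-1}$ of $G$. Because $S=G$ is reductive, the closed $G$-orbits for the conjugation action are precisely the semisimple conjugacy classes. By Lemma \ref{categ} and the definition \eqref{worddn}, $\sigma$ lies in the kernel of \eqref{action3} if and only if every semisimple $g\in G$ is conjugate to $g^{-1}$; equivalently, \eqref{action3} is an embedding if and only if some semisimple element is \emph{not} conjugate to its inverse. I would then fix a maximal torus $T$ with Weyl group $W$ and use the standard facts that every semisimple class meets $T$ and that two elements of $T$ are $G$-conjugate exactly when they are $W$-conjugate. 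This recasts the condition ``$g\sim g^{-1}$ for all semisimple $g$'' as ``$t^{-1}\in W\!\cdot t$ for every $t\in T$.''

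The crux is to promote this pointwise statement to a single Weyl element realizing inversion. For each $w\in W$ the set $T_w=\{t\in T:w(t)=t^{-1}\}$ is the kernel of the group homomorphism $t\mapsto w(t)\,t$, hence a closed subgroup of $T$; if $T=\bigcup_{w\in W}T_w$, then irreducibility of $T$ forces $T_w=T$ for some $w$, i.e.\ $w$ acts as $-{\rm id}$ on $X_*(T)$. Therefore $\sigma$ lies in the kernel if and only if $-{\rm id}\in W$, so \eqref{action3} is an embedding if and only if $-{\rm id}\notin W$. Decomposing the root system of $G$ into the irreducible systems of its connected simple normal subgroups, $W$ is the corresponding direct product acting componentwise (and trivially on the central directions), so membership of $-{\rm id}$ in $W$ reduces to its membership in the Weyl group of each simple factor. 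The last ingredient is the classical determination, through the longest element $w_0$ and the diagram automorphism $-w_0$ (Bourbaki), of the irreducible types in which $-{\rm id}$ fails to lie in $W$: these are exactly ${\sf A}_\ell$ with $\ell\geqslant 2$, ${\sf D}_\ell$ with $\ell$ odd, and ${\sf E}_6$, which is the list \eqref{type}. Hence $-{\rm id}\notin W$ precisely when $G$ has a simple factor of one of these types, establishing $({\rm b}_1)\Leftrightarrow({\rm b}_2)$.

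I expect the main obstacle to be the second half of part (b): both the upgrade from the pointwise condition ``$t^{-1}\in W\!\cdot t$'' to the existence of a single $w=-{\rm id}$ (which rests on the irreducibility argument above and on careful bookkeeping in the torus/lattice dictionary) and the type-by-type verification, via $-w_0$, that $-{\rm id}\in W$ holds outside the list \eqref{type}. By contrast the inner-automorphism construction in part (a) and the initial reduction in part (b) through Lemma \ref{categ} are routine.
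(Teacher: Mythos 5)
Your proposal is correct and follows essentially the same route as the paper: part (a) via the inner automorphism $f_i\mapsto f_1f_if_1^{-1}$, whose action is diagonal conjugation by $g_1$ and hence trivial on the quotient, and part (b) via Lemma \ref{categ}, the identification of closed orbits with semisimple classes, reduction to a maximal torus, and the criterion $-{\rm id}\in W$ settled by the classification of longest elements $w_0$. The only difference is that you spell out the upgrade from the pointwise condition $t^{-1}\in W\cdot t$ to $-{\rm id}\in W$ (via the closed subgroups $T_w$ and irreducibility of $T$), a step the paper leaves implicit.
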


\begin{proof}
Let $n>1$ and let $\sigma, \tau\in {\rm End}(F_n)$ be given by the formulas
\begin{equation}\label{conj}
\sigma(f_i)=
\begin{cases}f_1&\mbox{if $i=1$},\\
f_1f_if_1^{-1}&\mbox{if $i>1$,}\end{cases}
\quad
\tau(f_i)=
\begin{cases}f_1&\mbox{if $i=1$},\\
f_1^{-1}f_if_1&\mbox{if $i>1$.}\end{cases}
\end{equation}
Then $\sigma\circ\tau=\tau\circ\sigma=1$. Therefore, $\sigma\in {\rm Aut}(F_n)$,
and $\tau=\sigma^{-1}$.

Now, let $x\in X$ be a point \eqref{gh}. Then it follows from
\eqref{mor}, \eqref{conj} that
$$\sigma_X(x)=(g_1, g_1g_2g_1^{-1},\ldots, g_1g_ng_1^{-1})=g_1\cdot x,$$
so $x$ and $\sigma_X(x)$ lie in the same  $S$-orbit.
Therefore, the point  $\pi(x)$
is fixed with respect to
 $\sigma_{X/\!\!/S}$. Since $\pi$ is surjective, this shows that
$\sigma$ lies in the kernel of homomorphism \eqref{action3}.
In view of $\sigma\neq 1$,
this proves\;(a).

Now, let $n=1$, so that $X=G$. Then ${\rm Aut}(F_n)$
is the group of order two, and if
 $\sigma\in {\rm Aut}(F_n)$, $\sigma\neq 1$, then $\sigma(f_1)=f_1^{-1}$, so that $\sigma_X(g)=g^{-1}$ for any $g\in G$. Every fiber of the morphism $\pi$
 contains a unique orbit consisting of semisimple elements, and it is
 the unique closed orbit in this fiber
 (see \cite{20-23}).

From this and Lemma\;\ref{categ}
the equivalence of the following properties follows:
\begin{enumerate}[\hskip 2.0mm\rm(i)]
\item $\sigma$ lies in the kernel of homomorphism  \eqref{action3};
\item $g$ and $g^{-1}$ are conjugate for any semisimple element
 $g\in G$.
\end{enumerate}

Since the intersection of any semisimple
conjugacy class with a fixed maximal torus of the group $ G $ is nonempty
 (see \cite[Thm. 11.10]{9-2}) and is an orbit of the normalizer of this torus
 (see \cite[6.1]{20-23} or \cite[1.1.1]{23-22}),  property (ii)
 is equivalent to the fact that the Weyl group $W$ of the group $ G $, considered as the subgroup of the group ${\rm GL}({\rm Lie}(T))$,
 contains $ -1 $.
 This, in turn, is equivalent to the fact that $ -1 $ is contained in the Weyl group of every nontrivial connected simple normal subgroup of $ G $. Let $C$ be a Weyl chamber in
 ${\rm Lie}(T)$. Since $-C$ is a Weyl chamber as well, the simple transitivity of the action of $W$ on the set of Weyl chambers implies the existence of a unique element $w_0\in W$ such that $w_0(C)=-C$. In view of $(-1)(C)=-C$, this means that the inclusion
 $-1\in W$ is equivalent to the equality $w_0=-1$. In \cite[Table I--IX]{21-3}, an explicit description of the element $w_0$ is given for each connected simple
algebraic group. It follows from it that the equality $w_0= -1$ for such a group is equivalent to the fact that
the type of this group
is not contained in list \eqref{type}.
This completes the proof.
\end{proof}
\end{subsec}

\begin{subsec}{\bf Kernel of the action of \boldmath${\rm Aut}(F_n)$
on \boldmath${X}/\!\!/S$ for finite $S$. }  Now, we consider the second case, where $ G $ is any (not necessarily affine) connected algebraic group, and $ S $ is its finite subgroup.

According to \cite[Prop.\;19 and Example 2) on p. 50]{22-21}, in this case, there exist an algebraic variety $ X/S $ and a morphism
\begin{equation} \label{gequ}
\rho \colon X \to X/S
\end{equation}
such that the pair $(\rho, X/S)$ is the geometric quotient for the diagonal action of $S$ on $X$ by conjugation.
In particular,
\begin{equation}\label{fields}
\rho^*(k(X/S))=k(X)^S.
\end{equation}

For every element $ \sigma \in { \rm Aut}(F_n) $, from Proposition \ref{pro}(g) and the properties of geometric quotient it follows the existence of
an element $ \sigma_{X/S} \in {\rm Aut}(X/S) $ such that
\begin{equation*}
\rho\circ\sigma_X=\sigma_{X/S}\circ\rho.
\end{equation*}

It arises a homomorphism
\begin{equation}\label{gqh}
{\rm Aut}(F_n)\to {\rm Aut}(X/S),\quad \sigma\mapsto (\sigma^{-1})_{X/S}.
\end{equation}
Its kernel is described as follows:

\begin{lemma}\label{lemma} Let
$G$ be a connected \textup(not necessarily affine\textup) algebraic group and let $S$ be its finite subgroup.
The following properties of an element
$\sigma\in {\rm Aut}(F_n)$ are equivalent:
\begin{enumerate}[\hskip 4.2mm\rm(a)]
\item $\sigma$ lies in the kernel of homomorphism \eqref{gqh}.
\item There exists an element $s\in S$ such that
\begin{equation}\label{===}
X=X_{\sigma(f_1), s, 1}=\ldots=X_{\sigma(f_n), s, n}.
\end{equation}
\end{enumerate}
If the group $G$ is nonsolvable
and
         $({\rm b})$ holds, then the following con\-di\-tions are equivalent:
\begin{enumerate}[\hskip 4.2mm\rm(i)]
\item[$({\rm b}_1)$] $\sigma=1$;
\item[$({\rm b}_2)$] $s\in {\mathscr C}(G)$.
\end{enumerate}
\end{lemma}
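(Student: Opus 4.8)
The plan is to first reduce the kernel condition of (a) to a statement about $S$-orbits, and then invoke the irreducibility of $X$. Since $\rho\colon X\to X/S$ is a geometric quotient, it is surjective and its fibers are exactly the $S$-orbits, so $\rho(x)=\rho(x')$ if and only if $x,x'$ lie in a common $S$-orbit. From the defining relation $\rho\circ\sigma_X=\sigma_{X/S}\circ\rho$ together with Proposition \ref{pro}(a),(b) one checks that $\sigma\mapsto\sigma_{X/S}$ is an anti-homomorphism with $1_{X/S}={\rm id}$, so $(\sigma^{-1})_{X/S}$ and $\sigma_{X/S}$ are mutually inverse; hence $\sigma$ lies in the kernel of \eqref{gqh}, i.e. $(\sigma^{-1})_{X/S}={\rm id}$, if and only if $\sigma_{X/S}={\rm id}$. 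By surjectivity of $\rho$ the latter is equivalent to $\rho\circ\sigma_X=\rho$, that is, to the requirement that for every $x\in X$ the point $\sigma_X(x)$ lie in the $S$-orbit of $x$.

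Next I would rewrite this orbit condition through the sets \eqref{worddn}. Writing $t\cdot(g_1,\dots,g_n):=(tg_1t^{-1},\dots,tg_nt^{-1})$ for the diagonal conjugation and unwinding \eqref{mor} and \eqref{worddn}, for a fixed $s\in S$ one has $x\in\bigcap_{i=1}^n X_{\sigma(f_i),s,i}$ precisely when $\sigma(f_i)(x)=s^{-1}g_is$ for all $i$, i.e. when $\sigma_X(x)=s^{-1}\cdot x$. Put $Y_s:=\bigcap_{i=1}^n X_{\sigma(f_i),s,i}$; each $Y_s$ is closed in $X$, being a finite intersection of the closed sets \eqref{worddn}. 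Since $s\mapsto s^{-1}$ permutes $S$, the orbit condition of the first paragraph is exactly the covering $X=\bigcup_{s\in S}Y_s$, which is the pointwise version of (b): for each $x$ some $s$ works.

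The crux, and the only real obstacle, is to upgrade this pointwise covering to a single uniform $s$, i.e. to deduce from $X=\bigcup_{s\in S}Y_s$ that $X=Y_{s_0}$ for one $s_0\in S$, which is precisely statement (b). Here I would use that $G$ is connected, hence irreducible as a variety, so that $X=G^n$ is irreducible, together with the finiteness of $S$, which makes $\{Y_s\}_{s\in S}$ a finite family of closed subsets. Since an irreducible variety cannot be the union of finitely many proper closed subsets, some $Y_{s_0}$ must coincide with $X$, giving (a)$\Rightarrow$(b). The converse is immediate: if $X=Y_{s_0}$ then a fortiori $X=\bigcup_{s}Y_s$, which is the orbit condition equivalent to (a).

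For the final equivalence, assume $G$ is nonsolvable and that (b) holds with witness $s$, so that $\sigma(f_i)(x)=s^{-1}g_is$ for all $x\in X$ and all $i$. If $({\rm b}_1)$ holds, i.e. $\sigma=1$, then $\sigma(f_i)(x)=g_i$, whence $s^{-1}g_is=g_i$; as $g_i$ runs over all of $G$ when $x$ runs over $X$, this forces $s\in{\mathscr C}(G)$, giving $({\rm b}_2)$. Conversely, if $s\in{\mathscr C}(G)$ then $s^{-1}g_is=g_i$, so $\sigma(f_i)(x)=g_i=f_i(x)$ for all $x,i$, that is $\sigma_X={\rm id}$; by Proposition \ref{pro}(a),(b) this yields $(\sigma^{-1})_X={\rm id}$, and since $G$ is a nonsolvable connected algebraic group, Theorem \ref{theo}(b), case $({\rm b}_1)$, guarantees that \eqref{restri} is an embedding, so $\sigma=1$. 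This proves $({\rm b}_2)\Rightarrow({\rm b}_1)$; nonsolvability is used only in this last implication, precisely to exclude the solvable situation of Theorem \ref{theo}(a).
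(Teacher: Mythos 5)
Your proof is correct and follows essentially the same route as the paper: reduce the kernel condition to the requirement that every point (equivalently, every $S$-orbit, all of which are closed since $S$ is finite) lies in the set \eqref{formu}, then use the irreducibility of $X=G^n$ together with the finiteness of $S$ to pass from the finite union of closed sets to a single uniform $s$, and finally settle the equivalence of $({\rm b}_1)$ and $({\rm b}_2)$ via Theorem \ref{theo}. The only cosmetic difference is that you spell out the orbit reduction directly (via surjectivity of $\rho$ and the anti-homomorphism property from Proposition \ref{pro}(a),(b)), where the paper simply cites the analogous argument from the proof of Lemma \ref{categ}.
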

\begin{proof}
Since \eqref{gequ} is a geometric factor, the morphism
$\rho$ is surjective and each of its fibers is an $S$-orbit. An argument similar to that used in the proof of  Lemma \ref{categ} shows that
condition (a) is equivalent to the condition that each $S$-orbit is contained in the set \eqref{formu}, i.e.,
the condition that the set \eqref{formu} coincides with the entire set $X$.
Since the algebraic
variety $X$ is irreducible, and the set \eqref{formu} is a union of finitely many
(because $S$ is finite)
closed sets of the form $\textstyle\bigcap_{i=1}^n X_{\sigma(f_i), s, i }$, the variety $X$ must coincide with one of them. Finally, the equality $X=\textstyle\bigcap_{i=1}^n X_{\sigma(f_i), s, i}$ is obviously equivalent to the system of equalities \eqref{===}. This proves that (a) and (b) are equivalent.

Now, let the group $G$ be unsolvable
         and let $({\rm b})$ holds. Consider an arbitrary point $x=(g_1,\ldots, g_n)\in X$. If
$({\rm b}_1)$ holds, then for each $i\in\{1,\ldots, n\}$, we have the equality $\sigma(f_i)=f_i$, and therefore, in view of  \eqref{worddn}
and \eqref{===}, the equality
$g_i=sg_is^{-1}$. Since $g_i$
may be any element of $G$, this means that $({\rm b}_2)$ holds.
Conversely, if $({\rm b}_2)$ holds, then for any $i$, it follows from \eqref{worddn} and \eqref{===} that
$\sigma(f_i)(x)=g_i$, i.e., $\sigma$ lies in the kernel of homomorphism \eqref{restri}, which, in view of   Theorem \ref{theo}, is trivial. Hence $({\rm b}_1)$ holds. This proves that $({\rm b}_1)$ and $({\rm b}_2)$ are equivalent.
\end{proof}
\end{subsec}

\begin{subsec}{\bf The case of unsolvable \boldmath $G$ and finite $S$.}\label{Sfinite} Now, we explore as to when in the considered situation homomor\-phism \eqref{gqh} is an embedding.
Since the action of the group $ S $ on $ G $ by conjuga\-tion is trivial if and only if
$ S \subseteq {\mathscr C}(G) $, in what follows we can (and shall) assume that the group $ S $ does not lie in $ {\mathscr C}(G ) $ (i.e., is noncentral).

\begin{theorem}\label{fg}
Let $G$ be a nonsolvable connected \textup(not necessarily affine\textup) algebraic group, and let
 $S$ be its noncentral finite subgroup.
Then for the diagonal action of $S$ on $X:=G^{n}$ by conjugation,
the following hold:
\begin{enumerate}[\hskip 4.2mm \rm(a)]
\item Homomorphism
\eqref{gqh} is an embedding of ${\rm Aut}(F_n)$ into
${\rm Aut}(X/S)$.
\item
The group ${\rm Aut}(X/S)$ is
\begin{enumerate}[\hskip 0mm $\rm(b_1)$]
\item
nonamenable if $n\geqslant 2$,
\item
non\-li\-near and contains the braid group
 $B_n$ if $n\geqslant 3$.
 \end{enumerate}
 \end{enumerate}
\end{theorem}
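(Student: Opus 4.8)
The plan is to deduce part~(a) from Lemma~\ref{lemma} by analysing the witnessing element~$s$, and then to obtain part~(b) directly from~(a) and Proposition~\ref{linea}. So I would start by taking an element $\sigma$ in the kernel of homomorphism~\eqref{gqh} and aim to show $\sigma=1$. By Lemma~\ref{lemma} this membership is equivalent to the existence of $s\in S$ with $X=X_{\sigma(f_1),s,1}=\ldots=X_{\sigma(f_n),s,n}$; unwinding definition~\eqref{worddn}, this says exactly that $\sigma(f_i)(x)=s^{-1}g_is$ for every $x=(g_1,\ldots,g_n)\in X$ and every $i$, i.e. that the map $\sigma_X$ coincides with the diagonal conjugation $x\mapsto s^{-1}xs$. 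By the final equivalence in Lemma~\ref{lemma} (available because $G$ is nonsolvable), proving $\sigma=1$ amounts to proving $s\in\mathscr C(G)$, so the whole problem reduces to showing that the witnessing $s$ is forced to be central.

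The key step is a specialization to a single variable. Fixing $i$ and evaluating $\sigma(f_i)(x)=s^{-1}g_is$ at the points $x$ with $g_i=h$ arbitrary and $g_j=e$ for all $j\neq i$, every letter of the word $\sigma(f_i)$ involving a generator other than $f_i$ collapses to $e$, and one is left with the group identity $h^{e_i}=s^{-1}hs$ holding for all $h\in G$, where $e_i\in\mathbb Z$ is the exponent sum of $f_i$ in $\sigma(f_i)$. In other words the $e_i$-th power map of $G$ equals the inner automorphism $c_s\colon h\mapsto s^{-1}hs$. Since $s$ has finite order, $c_s$ has some finite order $N$, whence $h^{e_i^N}=h$, that is $h^{e_i^N-1}=e$, for every $h\in G$. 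As $G$ is nonsolvable it contains a nontrivial torus and hence elements of arbitrarily large order; this forces $e_i^N=1$, so $e_i\in\{1,-1\}$. The value $e_i=-1$ gives $h^{-1}=s^{-1}hs$, and applying this to products $h_1h_2$ makes $G$ abelian, contradicting nonsolvability. Therefore $e_i=1$, so $h=s^{-1}hs$ for all $h\in G$, i.e. $s\in\mathscr C(G)$. By Lemma~\ref{lemma} this yields $\sigma=1$, proving~(a). (Equivalently, once $s$ is central one has $\sigma(f_i)(x)=g_i=f_i(x)$ for all $x$, so $\sigma$ lies in the kernel of~\eqref{restri}, which is trivial by Theorem~\ref{theo}.)

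With~(a) established, part~(b) is immediate: homomorphism~\eqref{gqh} embeds ${\rm Aut}(F_n)$ into ${\rm Aut}(X/S)$, so Proposition~\ref{linea} applies with $H={\rm Aut}(X/S)$. Its statements~(b),~(c),~(d) give, respectively, nonamenability of ${\rm Aut}(X/S)$ for $n\geqslant 2$ and its nonlinearity together with the presence of the braid group $B_n$ for $n\geqslant 3$, which are precisely the asserted properties $({\rm b}_1)$ and $({\rm b}_2)$.

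I expect the whole difficulty to sit in part~(a), namely in forcing the witnessing $s$ to be central, equivalently in excluding a nonidentity $\sigma$ from the kernel. The subtlety is that Lemma~\ref{lemma} only supplies some $s\in S$, a priori noncentral, and one must squeeze enough out of the single relation $\sigma_X=c_s$ to control it. The specialization above, which turns this relation into the identity $h^{e_i}=s^{-1}hs$ and then lets the finiteness of the order of $s$ bound the exponent, is what makes the argument succeed, and it has the advantage of being characteristic-free. A more direct route via Borel's dominance theorem (as used for Theorem~\ref{theo}) looks tempting but runs into the problem that passing to the semisimple quotient $G_{\mathrm{aff}}/{\rm Rad}(G_{\mathrm{aff}})$ may convert conjugation by $s$ into an outer automorphism, for which the associated word map need not have its image contained in a single translated conjugacy class.
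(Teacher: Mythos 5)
Your proof is correct and follows essentially the same route as the paper's: both reduce the kernel condition via Lemma~\ref{lemma} to a group identity of the form ``power map equals conjugation by $s$'', use the finite order of $s$ to deduce a bounded-torsion identity on $G$, contradict this with the unbounded torsion of a positive-dimensional torus inside the nonsolvable group, and dispose of the exponents $\pm 1$ by centrality and commutativity respectively, with part~(b) then following from Proposition~\ref{linea}. The only (immaterial) difference is the choice of specialization: the paper sets $g_1=\cdots=g_n=g$ and works with the total exponent sum $d$ of $\sigma(f_i)$, whereas you set $g_j=e$ for $j\neq i$ and work with the exponent sum of $f_i$ alone.
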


\begin{proof}
Arguing by contradiction, assume that the kernel of homomor\-phism \eqref{gqh} contains an element $ \sigma \in {\rm Aut} (F_n) $, $ \sigma \neq 1 $.
Then by Lemma \ref{lemma}, there is an element
\begin{equation}\label{eles}
s\in S\setminus {\mathscr C}(G),
\end{equation}
such that equalities  \eqref{===} hold.  In view of \eqref{worddn}, this means that for each  $i\in \{1,\ldots, n\}$, the following group identity holds:
\begin{equation}\label{condi1}
\sigma(f_i)(g_1,\ldots, g_n)=sg_is^{-1}\quad \mbox{for all  $g_1,\ldots, g_n\in G$.}
\end{equation}
In particular, for every
$g\in G$, the equality obtained by taking
$g_1=\ldots=g_n=g$ in \eqref{condi1} holds.
Since $ \sigma (f_i) $
has form \eqref{aword}, this means
the existence of an integer $ d $ such that
the following group identity holds:
\begin{equation}\label{sgs1}
g^d=sgs^{-1}\quad\mbox{for each $g\in G$.}
\end{equation}

Notice that
\begin{equation}\label{neq}
d\neq 1\quad \mbox{and}\quad d\neq -1.
\end{equation}
Indeed, in view of \eqref{sgs1}, if $ d = 1 $, then $ s \in {\mathscr C}(G) $, which contradicts \eqref{eles}.
If $d=-1$, then for any
$g, h\in G$ the following equality holds
\begin{equation*}
h^{-1}g^{-1}=(gh)^{-1}\overset{\eqref{sgs1}}{=}s(gh)s^{-1}
=sgs^{-1}shs^{-1}\overset{\eqref{sgs1}}{=}g^{-1}h^{-1},
\end{equation*}
which means commutativity of the group $G$ and contradicts its non\-solv\-ability.

Next, if $ r $ is a positive integer, then the following group identity holds:
\begin{equation}\label{idenex}
s^rgs^{-r}=g^{d^r}\quad \mbox{for each $g\in G$.}
\end{equation}
Indeed, \eqref{idenex} becomes \eqref{sgs1} for $ r = 1 $. Arguing by induction, from
$ s^{r-1} gs^{- r + 1} = g^{d^{r-1}} $ we obtain
\begin{equation*}\label{sgs2}
s^rgs^{-r}{=}s(s^{r-1}gs^{-r+1})s^{-1}=
sg^{d^{r-1}}s^{-1}
\overset{\eqref{sgs1}}{=}(g^{d^{r-1}})^d=g^{d^r},
\end{equation*}
as stated.

Since the group $S$ is finite, the order of the element
$s$ in group identity \eqref{sgs1} is finite. Let $r$ in \eqref{idenex} be equal to this order. Then \eqref{idenex} becomes the group identity
\begin{equation}\label{idenex1}
e=g^{d^r-1}\quad \mbox{for each $g\in G$.}
\end{equation}

Since, in view of \eqref{neq}, we have $d^r-1\neq 0$,
it follows from group identity  \eqref{idenex1} that
  $G$ is a torsion group whose element orders are upper bounded.

As in the proof of the Theorem \ref{theo}
(whose notation we retain), the affine algebraic group $G_{\rm aff}$ is unsolvable. Hence
it contains a nontrivial semisimple element, and therefore a torus of positive dimension (see \cite[Thms. 4.4, 11.10]{9-2}).

But
the set of orders of elements of the torsion subgroup of any torus of positive dimension is not upper bounded (see \cite[Prop.\,8.9]{9-2}).
The resulting contradiction completes the proof of (a).
Statement (b) follows from (a) and
Proposition \ref{linea}.
\end{proof}
\end{subsec}

\begin{subsec}{\bf Nonrational varieties \boldmath $X/\!\!/S$.}
 The supplied by Theorem \ref{fg}  family of algebraic varieties $ X/S $ such that $ {\rm Aut} (F_n) $ lies in ${\rm Aut}(X/S)$,
 contains rational affine algebraic varieties:
 such is $ X $ itself if $ G $ is unsolvable and affine, because the underlying
variety of any connected affine algebraic group is rational (see \cite[Cor. 14.14]{9-2}).

We will now show that this family contains
also affine nonrational (and even not stably rational) algebraic varieties.

We use the following known statement (see, e.g., \cite[Thm.\,1]{16-18}).

\begin{lemma}\label{strat}
If the field of invariant rational functions of some faithful linear action of a finite group on a finite-dimensional vector space over $ k $ is stably rational over $ k $, then the same property holds for any other such action of this group.
\end{lemma}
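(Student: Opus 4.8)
The plan is to reduce the assertion to the classical \emph{no-name lemma} together with the fact that stable rationality over $k$ is an invariant of stable isomorphism of fields. Write $\Gamma$ for the finite group in question, and let $V_1$ and $V_2$ be two faithful finite-dimensional linear representations of $\Gamma$ over $k$. The goal is to prove that $k(V_1)^\Gamma$ and $k(V_2)^\Gamma$ become $k$-isomorphic after adjoining suitable independent transcendentals; once this is shown, stable rationality of one of them transfers at once to the other.

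The key ingredient is the following form of the no-name lemma: if $V$ is a \emph{faithful} representation of $\Gamma$ and $W$ is an \emph{arbitrary} representation, then $k(V\oplus W)^\Gamma$ is purely transcendental over $k(V)^\Gamma$ of transcendence degree $\dim W$. To prove it, I would first produce a $\Gamma$-invariant dense open subset $U\subseteq V$ on which $\Gamma$ acts freely: since the action is faithful, for each $\gamma\neq 1$ the fixed subspace $\{v\in V\mid \gamma v=v\}$ is a proper linear subspace, and one takes for $U$ the complement of the union of these finitely many subspaces. The geometric quotient $\pi\colon U\to U/\Gamma$ then exists (a finite group acting on a quasi-affine variety, cf. \cite{22-21}) and is an \'etale principal $\Gamma$-bundle. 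The $\Gamma$-equivariant projection $V\oplus W\to V$ restricts over $U$ to the equivariant trivial bundle $U\times W$, which descends along $\pi$ to a vector bundle $\mathcal E\to U/\Gamma$ of rank $\dim W$. Any vector bundle is trivial over a dense open subset of an irreducible base, so after shrinking, the total space of $\mathcal E$ is birational to $(U/\Gamma)\times\mathbb{A}^{\dim W}$. Passing to function fields and using $k(U/\Gamma)=k(V)^\Gamma$ together with $k(\mathcal E)=k(U\times W)^\Gamma=k(V\oplus W)^\Gamma$ yields the claim.

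With the no-name lemma available, I would apply it twice. Taking $V=V_1$, $W=V_2$ shows that $k(V_1\oplus V_2)^\Gamma$ is purely transcendental over $k(V_1)^\Gamma$; taking $V=V_2$, $W=V_1$ shows that the same field $k(V_1\oplus V_2)^\Gamma$ is purely transcendental over $k(V_2)^\Gamma$ (both applications are legitimate because $V_1$ and $V_2$ are both faithful). Hence there are independent transcendentals $u_1,\dots,u_p$ and $v_1,\dots,v_q$ with
\[
k(V_1)^\Gamma(u_1,\dots,u_p)\;\cong\;k(V_1\oplus V_2)^\Gamma\;\cong\;k(V_2)^\Gamma(v_1,\dots,v_q),
\]
so $k(V_1)^\Gamma$ and $k(V_2)^\Gamma$ are stably isomorphic over $k$. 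If now $k(V_1)^\Gamma$ is stably rational, say $k(V_1)^\Gamma(y_1,\dots,y_a)$ is purely transcendental over $k$, then adjoining the independent transcendentals $y_1,\dots,y_a$ to the displayed isomorphism makes $k(V_2)^\Gamma(v_1,\dots,v_q,y_1,\dots,y_a)$ a purely transcendental extension of the purely transcendental field $k(V_1)^\Gamma(y_1,\dots,y_a)$, hence purely transcendental over $k$; thus $k(V_2)^\Gamma$ is stably rational over $k$, as required.

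I expect the main obstacle to be the no-name lemma itself, and within it the descent and generic-triviality step. One must check that the equivariant bundle $U\times W$ genuinely descends to a vector bundle along the quotient $\pi$---here the finiteness of $\Gamma$ and the freeness of its action on $U$ make this routine---and that generic triviality of $\mathcal E$ is enough for the conclusion: since only function fields are tracked, trivializing $\mathcal E$ over a single dense affine open of $U/\Gamma$ suffices, which is automatic because a coherent locally free sheaf is free on a sufficiently small affine open. The remaining bookkeeping with transcendentals is elementary once the lemma is in place.
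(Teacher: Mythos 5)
Your proof is correct. The paper does not actually prove Lemma~\ref{strat} --- it invokes it as a known statement with a citation to \cite[Thm.~1]{16-18} --- and your argument via the no-name lemma (cutting down to the free locus, forming the \'etale quotient, descending the equivariant trivial bundle $U\times W$ to a vector bundle on $U/\Gamma$, and using generic triviality, followed by the symmetric double application to $V_1\oplus V_2$) is exactly the standard proof of that cited result; it is valid in every characteristic, which matters here since $k$ is an arbitrary algebraically closed field.
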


Let $p$ be a prime integer other than ${\rm char}(k)$.
In \cite{17-20} are found finite groups $F$ of order $p^9$ and group embeddings
\begin{equation*}
\iota\colon F\hookrightarrow {\rm GL}(V),
\end{equation*}
where $V$ is a finite-dimensional vector space over $k$,
such that the field of $\iota(F)$-invariant rational functions on $V$ is not stably rational over
 $k$. In view of Lemma \ref{strat}, replacing $V$ and $\iota$ if needed, we
 can (and shall) assume that
\begin{equation}\label{ncent}
\iota(F)\cap {\mathscr C}\big({\rm GL}(V)\big)=\{{\rm id}_V\}.
\end{equation}
Indeed, let
$L$ be a one-dimensional vector space over $k$. Since
$${\mathscr C}\big({\rm GL}(V\oplus L)\big)=\{c\cdot {\rm id}_{V\oplus L}\mid c\in k, c\neq 0\},$$ for the group embedding
\begin{equation*}
\iota'\colon F\hookrightarrow {\rm GL}(V\oplus L),\quad f\mapsto \iota(f)\oplus{\rm id}_L,
\end{equation*}
we have $\iota'(F)\cap {\mathscr C}\big({\rm GL}(V\oplus L)\big)=\{{\rm id}_{V\oplus L}\}$.

Now we put
\begin{equation*}
G:={\rm GL(V)},\quad S:=\iota(F).
\end{equation*}
It follows from \eqref{ncent} that the diagonal action of the group $S$
on the vector space ${\rm End(V)}^{\oplus n}$ by conjugation
is a faithful linear action. There\-fore, in view of Lemma  \ref{strat},
the field of rational $S$-invariant functions on
${\rm End(V)}^{\oplus n}$
is not stably rational over
 $k$. Since  $X:=G^{n}$ is an
$S$-invariant open subset of ${\rm End}(V)^{\oplus n}$, this implies that
the field $k(X)^S$ for the diagonal action of  $S$ on $X$ by conjugation
is not stably rational over $k$. This and \eqref{fields} yield that  the algebraic variety
$X/S$ is not stably rational. Since the group $G$ is affine,
we have $X/S=X/\!\!/S$ (see \cite[Prop. 18 on p. 48]{22-21}), so that the algebraic variety $X/S$  is affine. Finally,  in view of nonsolvability of the group $G$,
it follows from Theorem \ref{fg} that
the group ${\rm Aut} (X/S)$
contains ${\rm Aut}(F_n)$.

\begin{remark}
Currently, the groups of orders $ p^6 $ and $ p^5 $ are known whose fields of rational invariants of faithful linear actions are not stably rational (see details and references in
\cite[Rem. on p. 414]{16-18}). They can be taken as $ F $ in the construction described in this section.
\end{remark}
\end{subsec}



\begin{thebibliography}{10}

\bibitem{1-1} C. Birkar,
{\it Singularities of linear systems and boundedness of Fano varieties},
{\tt {\footnotesize arXiv}:1609.05543} (2016).

\bibitem{2} A. Borel, {\it On free subgroups of semi-simple groups}, Enseign. Math. (2), {\bf 29}
(1983), no. 1-2, 151--164.

\bibitem{9-2}
A. Borel, {\it Linear Algebraic Groups}, Graduate Texts in Mathematics, Vol. 126, Sprin\-ger-Verlag, New York, 1991.

\bibitem{21-3} N.\;Bourbaki, {\it Groupes et Alg\`ebres de Lie}, Chaps. IV, V, VI, Hermann, Paris, 1968.


\bibitem{10-4} E. Breuillard, B. Green, R. Guralnick, T. Tao, {\it Strongly dense free subgroups of semi\-simple algebraic groups}, Israel J. Math. {\bf  192}
    (2012), 347--379.

\bibitem{13-5}    E. Breuillard, B. Green, R. Guralnick, T. Tao,
{\it Expansion in finite simple groups of Lie type},
J. Eur. Math. Soc. {\bf 17} (2015), 1367--1434.


    \bibitem{15-6}  S. Cantat, {\it Linear groups in Cremona groups}, Int. conf. {\it Birational and Affine Geometry},
April 25, 2012, Moscow, Steklov Math. Inst.\;RAS,
{\tt {\footnotesize http:$/\!\!/$www.\break mathnet.ru/php/presentation.phtml?presentid=4848$\&$option$\underline{\ }$lang=eng}}

\bibitem{29-7} S. Cantat, {\it A remark on groups of birational transformations and the word problem \textup(af\-ter Yves de Cornulier\textup)} (2013),
    {\tt {\footnotesize https:$/\!\!/$www.semanticscholar.\break org/paper/A-REMARK-ON-GROUPS-OF-BIRATIONAL-TRANSFORMATIONS-DE-\break Cantat/8cd955de23876f349a58d43cc135bfd7d3da5d95}}.

    \bibitem{14-8} D. Cerveau, J. D\'eserti, {\it Transformations Birationnelles de Petit Degr\'e}, Cours Sp\'ecialis\'es, Vol. 19, Collection SMF, 2013.

\bibitem{3-9} Y. Cornulier, {\it Nonlinearity of some subgroups of the planar Cremona group},
{\tt {\footnotesize arXiv}:1701.00275v1} (2017).

\bibitem{4-10} Y. Cornulier, {\it Sofic profile and computability
of Cremona groups}, Michigan Math. J. {\bf 62} (2013), 823--841.


\bibitem{12-11} D. B. A. Epstein, {\it Almost all subgroups of a Lie group are free}, J. Algebra
{\bf 19} (1971), 261--262.


 \bibitem{5-12} E. Formanek, C. Procesi, {\it The automorphism group of a free group is not linear}, J. Algebra {\bf 149} (1992), 494--499.

     \bibitem{5-12a} D. Krammer, {\it Braid groups are linear}, Annals of  Math. {\bf 155} (2002), 131--156.

     \bibitem{24-13} R.\;C.\;Lyndon, P.\;E.\;Schupp, {\it Combinatorial Group Theory},
     Ergebnisse der Mathematik und ihrer Grenzgebiete, Bd. 89, Springer-Verlag, Berlin, 1977.

 \bibitem{18-14} W. Magnus, A. Karrass, D. Solitar, {\it Combinatorial Group Theory}, Interscience, New York, 1966.

  \bibitem{8-15}   C. F. Miller III, {\it The word problem in quotients of a group}, in; {\it Aspects of Effective
Algebra} (Clayton, 1979), Upside Down,Yarra Glen, Victoria, 1981, pp. 246–250.

  \bibitem{19-16} D. Mumford, J. Fogarty, {\it Geometric Invariant Theory}, Second Edition,
  Er\-geb\-nisse der Mathematik und ihrer Grenzgebiete, Bd. 34, Springer-Verlag, Berlin, 1982.

 \bibitem{7-17}  V.\;L.\;Popov,\;{\it Some subgroups of the Cremona groups}, in: {\it Affine Algebraic Geometry}, Pro\-ceedings of the conference on the occasion of M. Miyanishi's 70th birthday (Osaka, Japan, 3--6 March 2011), World Scientific Publ., Singapore, 2013, pp.\;213--242.

 \bibitem{16-18}     V.\;L.\;Popov, {\it Rationality and the FML invariant}, J. Ramanujan Math. Soc. {\bf 28A} (2013), 409--415.

     \bibitem{60-19}  V.\;L.\;Popov,\;{\it Jordan groups and auto\-morphism groups of algebraic varieties}, in:    {\it Auto\-morphisms
in Birational and Affine Geometry} (Levico Terme, 2012), Springer Proc.
Math. Stat., Vol. 79, Springer, Cham, 2014, pp.\;185--213.

\bibitem{22}  V.\;L.\;Popov,\;{\it Embeddings of groups ${\rm Aut}(F_n)$ into automorphism groups of algebraic varieties}, {\tt arXiv:2106.02072v1} (2021).

    \bibitem{23}    Yu. Prokhorov, C. Shramov, {\it Jordan property for Cremona groups},
    Amer. J. Math. {\bf 138} (2016), 403--418.

     \bibitem{17-20} D.\;J.\;Saltman, {\it Noether's problem over an algebraically closed field}, Invent. Math. {\bf 77} (1984), 71--84.

         \bibitem{22-21} J.-P.\;Serre, {\it Algebraic Groups and Class Fields}, Graduate Texts in Mathe\-matics, Vol. 117, Springer, New York, 1997.

          \bibitem{23-22}  J.-P.\;Serre, {\it Sous-groupes finis des groupes de Lie}, S\'eminaire Bourbaki, 1998--99, exp. ${\rm n}^\circ\,864$.

          \bibitem{20-23} R.\;Steinberg, {\it Regular elements in semisimple algebraic groups}, Publ. Math. l'IHES {\bf 25} (1965), 49--80.



\end{thebibliography}
\end{document}